\theoremstyle{definition}
\newtheorem{defin}{Definition}[section]
\newtheorem{rem}[defin]{Remark}
\theoremstyle{plane}
\newtheorem{thm}[defin]{Theorem}
\newtheorem{lemma}[defin]{Lemma}
\newcommand{\tsl}{\textsl}
\newcommand{\mbb}{\mathbb}
\newcommand{\mc}{\mathcal}
\newcommand{\mf}{\mathfrak}
\newcommand{\mds}{\mathds}
\newcommand{\veps}{\varepsilon}
\newcommand{\wtilde}{\widetilde}
\newcommand{\vphi}{\varphi}
\newcommand{\oline}{\overline}
\newcommand{\ra}{\rightarrow}
\newcommand{\g}{\gamma}
\renewcommand{\k}{\kappa}
\newcommand{\s}{\sigma}
\renewcommand{\o}{\omega}
\newcommand{\lan}{\langle}
\newcommand{\ran}{\rangle}
\newcommand{\dd}{\textnormal{d}}
\newcommand{\dx}{ \, {\rm d} x}
\newcommand{\R}{\mathbb{R}}
\newcommand{\Q}{\mathbb{Q}}
\newcommand{\N}{\mathbb{N}}
\newcommand{\Z}{\mathbb{Z}}
\newcommand{\T}{\mathbb{T}}
\newcommand{\D}{\mathbb{D}}
\renewcommand{\P}{\mathbb{P}}
\renewcommand{\div}{{\rm div}\,}
\newcommand{\divh}{{\rm div}_h}
\newcommand{\curlh}{{\rm curl}_h}
\newcommand{\curl}{{\rm curl}\,}
\newcommand{\Id}{{\rm Id}\,}
\newcommand{\Supp}{{\rm Supp}\,}
\newcommand{\ess}{{\rm ess}}
\newcommand{\res}{{\rm res}}
\newcommand{\loc}{{\rm loc}}
\def\d{\partial}
\def\div{{\rm div}\,}
\title{\textsc{\Large{\textbf{Incompressible limits at large Mach number for \\ 
a reduced compressible MHD system}}}}
\author{\normalsize\textsl{Francesco Fanelli}$\,^{1a,b,c}\,$, $\qquad$
\textsl{Young-Sam Kwon}$\,^{2}\,$, $\qquad$ \textsl{Aneta Wr\'oblewska-Kami\'nska}$\,^{3}$ \vspace{.5cm} \\
\footnotesize{$\,^{1a}\;$ \textsc{BCAM -- Basque Center for Applied Mathematics}} \\ 
\footnotesize{Alameda de Mazarredo 14, E-48009 Bilbao, Basque Country, SPAIN}  \vspace{0.2cm} \\
%
\footnotesize{$\,^{1b}\;$ \textsc{Ikerbasque -- Basque Foundation for Science}} \\
\footnotesize{Plaza Euskadi 5, E-48009 Bilbao, Basque Country, SPAIN} \vspace{.2cm} \\
%
\footnotesize{$\,^{1c}\;$ \textsc{Univ. Lyon, Universit\'e Claude Bernard Lyon 1}, CNRS UMR 5208, \textit{Institut Camille Jordan},} \\
\footnotesize{43 blvd. du 11 novembre 1918, F-69622 Villeurbanne cedex, FRANCE} \vspace{.2cm} \\
\footnotesize{$\,^{2}\;$ \textsc{Dong-A University}, \textit{Departement of Mathematics},} \\ 
{\footnotesize Nakdongdaero 550-37 Sahagu, 49315 Busan, REPUBLIC OF KOREA} \vspace{.2cm} \\
\footnotesize{$\,^{3}\;$ \textsc{Institute of Mathematics of Polish Academy of Sciences},} \\
{\footnotesize ul.\'Sniadeckich 8, 00-656 Warszawa, POLAND} \vspace{.2cm} \\
\footnotesize{Email addresses: $\,^{1}\;$\ttfamily{ffanelli@bcamath.org}}, $\;$
\footnotesize{$\,^{2}\;$\ttfamily{ykwon@dau.ac.kr}, $\;$ $\,^{3}\;$\ttfamily{a.wroblewska@impan.pl}}
\vspace{.2cm}
}
\date\today}
\begin{document}

\maketitle

\subsubsection*{Abstract}
{\footnotesize

This paper studies a singular limit problem for a reduced model for compressible non-resistive MHD which was first introduced in \cite{Li-Sun_JDE, Li-Sun}
in a two-dimensional setting. This system can also be related to a certain class of two-fluid models.
By a suitable rescaling of the magnetic pressure in terms of some
parameter $\veps>0$, by letting $\veps\to 0$ we perform the incompressible limit while keeping the Mach number of order $O(1)$.

The study is conducted in the framework of global in time finite energy weak solutions and for ill-prepared initial data.
We also consider a similar problem in presence of a strong Coriolis term.
The key ingredient of the proof, based on a compensated compactness argument, is the use of the transport equation (well-known in the context
of two-fluid models) underlying the dynamics. Thanks to it, and differently from previous studies about the incompressible limit,
we are able to identify the asymptotics of the terms of order $O(\veps)$ and to characterise their dynamics; such an information
is in fact crucial to obtain a closed system in the limit.

}

\paragraph*{2020 Mathematics Subject Classification:}{\small 35Q35 
(primary);
35B25, 
35B40, 
76W05 
(secondary).}

\paragraph*{Keywords:}{\small reduced compressible MHD system; two-fluid system; incompressible limit; large Mach number; ill-prepared initial data;
strong Coriolis force.}

\section{Introduction} \label{s:intro}

Owing to its relevance in several physically motivated contexts, like for instance the description of large scale flows, the rigorous study of the
incompressible limit for weakly compressible fluids has attracted a lot of interest from the mathematical community in the last decades.
This has been performed in several frameworks (weak solutions \tsl{vs} strong solutions), for several type of domains (the whole space, the periodic setting,
a general bounded domain or an exterior domain), under various assumptions on the fluid (viscous \tsl{vs} inviscid cases, isentropic \tsl{vs} heat-conducting
fluids) and on the initial data (which may be either well-prepared or ill-prepared).

The classical strategy to perform the incompressible limit relies on the so-called \emph{low Mach number} regime
(see \tsl{e.g.} \cite{Kl-Maj, L-Masm, Hag-L, Des-G, Des-G-L-M}). The Mach number ${\rm Ma}$ is an
adimensional physical parameter defined as the ratio between the typical speed of the fluid and the speed of sound in the medium. When computing an
adimensionalisation of the equations of motion, the square of the Mach number appears as a prefactor in front of the pressure gradient: more precisely,
in the momentum equation the following term comes into play,
\[
 \frac{1}{{\rm Ma}^2}\,\nabla P(\rho)\,,
\]
where $\rho$ is the fluid density and $P$ is the fluid pressure. Thus, when letting ${\rm Ma}\to0$, the density converges (at least, formally) towards a constant state
(in fact, one can discard the time-dependence from the mass conservation equation),
which implies, thanks to the mass conservation equation, that the target velocity $u$ must satisfy the constraint $\div u=0$.
We refer also to \cite{Met-Sch, Al, F-N_2007, F-N_2009} for more advances in the case of non-isentropic flows, as well as
to book \cite{F-N} for a complete overview of the subject and additional references.
We refrain from mentioning here related results, obtained by following a similar strategy, devoted to the rigorous justification of the anelastic approximation
(see \cite{F-Z} and references therein).

More recently, Danchin and Mucha \cite{D-M, D-M_Tun} proposed another method to perform the incompressible limit (see also \cite{F_PhysD} for a related study).
Their approach relies on the penalisation of the \emph{bulk viscosity coefficient}, namely of the term
\[
 \lambda\,\nabla\div u
\]
appearing in the momentum equation. Sending $\lambda\to+\infty$ directly implies, from energy estimates, that the limit velocity field must satisfy the divergence-free
constraint as before, but this time with no more need of imposing the Mach number to be small.

\medbreak
The goal of the present paper is to propose another method to perform the incompressible limit, in the context of a certain class of \emph{two-fluid models}.
To fix ideas, we will consider (see Section \ref{ss:system} below) a reduced MHD system first introduced in \cite{Li-Sun_JDE, Li-Sun}
in two space dimensions; while keeping the same mathematical structure of the equations, for the sake of generality we will consider the three-dimensional
version of that system.

In those equations, we will consider the Mach number and the bulk viscosity coefficient of order $O(1)$,
meaning that these parameters will not be rescaled and they will be kept fixed in our study.
The idea for performing the incompressible limit is instead to penalise the gradient of the magnetic pressure
(in terms of two-fluid systems, we penalise only one pressure gradient while keeping the other at constant order of magnitude), and to use
a transport structure  underlying the equations to gain smallness on the perturbations of the density around some given constant state.

To explain better this idea, let us first introduce the system of equations, together with the right scaling, we will be considering throughout this work. Then,
in Subsection \ref{ss:over} we will give an overview of the results and of the main ideas of the proof; we will conclude the discussion
with a short comparison of our study with related results available in the literature.

\subsection{The system of equations} \label{ss:system}
Let $\veps>0$ and $\k\in\{0,1\}$ be two parameters, with the idea that $\k$ will allow us to turn on and off the effects of the Coriolis force, but, once chosen,
it will be kept fixed while letting $\veps$ going to $0$.
The system of equations we are going to consider in this paper is the following one:
\begin{equation} \label{eq:reduced-MHD}
\left\{\begin{array}{l}
        \d_t\rho\,+\,\div\big(\rho\,u\big)\,=\,0 \\[1ex]
       \d_t\big(\rho\,u\big)\,+\,\div\big(\rho\,u\otimes u\big)\,+\,\nabla P(\rho)\,+\,\dfrac{1}{2\,\veps^{2}}\,\nabla b^2\,+\,
       \dfrac{\k}{\veps}\,e_3\times\rho\,u\,-\,\div\mbb S(\D u)\,=\,0 \\[2ex]
       \d_tb\,+\,\div\big(b\,u\big)\,=\,0\,.
       \end{array}
\right.
\end{equation}
Here, the unknowns are the two scalar functions $\rho\geq0$ and $b\in\R$, and the vector-field $u\in\R^3$.
From a pure modelling perspective (see more details below), the function $\rho$ represents the density of the fluid (whence the requirement of positivity) and
$u$ its velocity field, whereas $b$ represents (one component of) the magnetic field, the fluid being assumed to be conductive.
The function $P=P(\rho)$ represents instead the fluid pressure; here, we assume that
\begin{align} \label{hyp:pressure}
&P\in C^1\big(\R_+\big)\cap C^2\big(\,]0,+\infty[\,\big)\,,\qquad P(0)=0\,, \\
\nonumber
&\qquad  P'(\rho)>0\qquad \forall\,\rho>0\,,\qquad\qquad\qquad \underline{P}\,\rho^{\g-1}\,\leq\,P'(\rho)\,\leq\,\overline{P}\,\rho^{\g-1}\qquad \forall \rho>1\,,
\end{align}
for some $\g>1$ and for two suitable constants $0<\underline{P}\leq \overline{P}$.

The fluid is supposed to be Newtonian, so that the viscous stress tensor $\mbb S$ takes the form
\begin{equation} \label{hyp:S}
\mbb S(\D u)\,:=\,\mu\left(D u\,+\,\nabla u\,-\,\frac{2}{3}\,\div u\,\Id\right)\,+\,\lambda\,\div u\,\Id\,,
\end{equation}
where $\D u$ is symmetric part of the Jacobian matrix $Du$ of the velocity field; namely,
$\D u = (Du + \nabla u)/2$, where we have set $\nabla u\,=\,^tDu$. In this paper, we suppose $\mu$ and $\lambda$ to be constants such that
\[
\mu\,>\,0\qquad\qquad \mbox{ and }\qquad\qquad \frac{4}{3}\,\mu\,+\,\lambda\,>\,0\,.
\]
Observe that $\div \mbb S(Du)\,=\,\mu\,\Delta u\,+\,\left(\frac13\mu+\lambda\right)\,\div u$.

Finally, the term $e_3\times \rho\, u$ allows us to take into account (when $\k=1$) the possible action of the Coriolis force; this is
relevant at geophysical scales (see \tsl{e.g.} \cite{C-R, Ped}), where the fast rotation of the Earth (or of any other celestial body) represents one
of the major factors which affect the fluid motion.
Here, $e_3=(0,0,1)$ denotes the unit vector directed along the vertical component and, for $V\in\R^3$, we have set
$e_3\times V = (-V_2, V_1, 0)$ to be the usual external product in $\R^3$.

\paragraph{The domain.}
System \eqref{eq:reduced-MHD} is set for $(t,x)\,\in\,\R_+\times\Omega$, where,
according to the available existence theory, $\Omega\subset\R^3$ must be a bounded smooth domain.
In order to avoid complications due to boundary effects, for simplicity in this paper we will assume that
\begin{align*}
&\mbox{when }\quad \k=0\,,\qquad\qquad 
\Omega\,=\,\T^3\,, \\[1ex] 
&\mbox{ and, \ when }\quad \k=1\,, \qquad\qquad \Omega\,=\,\T^2\times\,]0,1[\,\,.
\end{align*}
In the latter case, we supplement the system with complete-slip boundary conditions:
\begin{equation} \label{bc:compl-slip}
 \big(u\cdot n\big)_{|\d\Omega}\,=\,0\qquad \mbox{ and }\qquad \Big(\big(\D u\cdot n\big)\times n\Big)_{|\d\Omega}\,=\,0\,,
\end{equation}
where $n\,=\,\pm\,e_3$ is the external unit normal vector to the boundary $\d\Omega$ of the domain.
Observe that, despite the physical relevance of a periodic domain in presence of fast rotation is certainly questionable,
our study is able to capture its main effects, and to show new interesting features linked to it at the level of the limit dynamics.

\begin{rem} \label{r:bc-period}
As is well-known (see \tsl{e.g.} \cite{Ebin}), problem \eqref{eq:reduced-MHD}, set in $\Omega = \T^2\times\,]0,1[\,$ and supplemented with
complete-slip boundary conditions \eqref{bc:compl-slip}, can be recasted as a periodic problem also with respect to
the third variable (see also Subsection \ref{ss:k=1_convect} for more details about this).

Therefore, when reviewing the weak solutions theory (see Subsection \ref{ss:weak-sol} below),
we can safely restrict our discussion to weak solutions defined on a period box.
\end{rem}

Before moving on, let us point out that, although at first sight the boundedness of $\Omega$ does not look essential for the asymptotic study,
we will use this property in some crucial estimates. Thus, the generalisation of our results to unbounded domains (for instance,
$\Omega=\R^3$ when $\k=0$, or $\Omega=\R^2\times\,]0,1[\,$ for $\k=1$) remains open.


\paragraph{Origin of the model.}
System \eqref{eq:reduced-MHD} represents the three-dimensional version of the reduced planar MHD system introduced in \cite{Li-Sun_JDE, Li-Sun}.
This is why we have adopted the terminology of those papers when interpreting the various unknowns appearing in the equations.
For the sake of completeness, we mention that the reduced system was formally derived from a general compressible non-resistive magnetohydrodynamics system in $3$-D,
by looking for solutions for which all the unknowns depend only on the horizontal variables $x_h=(x_1,x_2)$, the velocity field
$U=(u_1,u_2,0)$ has zero third component 
and the magnetic field $B=(0,0,b)$ is simply vertical.

However, we remark that system \eqref{eq:reduced-MHD} can be seen also as a special type of two-fluid model, and in fact the results
of \cite{Li-Sun_JDE, Li-Sun} exploit this analogy in their investigations.
The analysis of two-fluid models are nowadays central in many mathematical works
(see \tsl{e.g.} \cite{V-W-Y, N-P, Li-Z, Pok-WK-Z, B-B-L, Zod}, just to mention a few).
This represents a strong motivation for us to keep a general perspective here and consider equations \eqref{eq:reduced-MHD} in three space 
dimensions.

\paragraph{The scaling.}
Before moving on, let us comment a bit on the scaling appearing in equations \eqref{eq:reduced-MHD}.
We see that the pressure term $\nabla P(\rho)$ is not penalised, and the value of the bulk viscosity coefficient $\lambda$ is kept fixed.
Instead, a large prefactor $\veps^{-2}$ appears in front of the magnetic pressure term. Taking the limit $\veps\to0$ activates the following mechanism:
the function $b=b_\veps$ must converge to a constant value, precisely as it happens in the (classical) low Mach number regime for compressible fluid flows;
in turn, from the continuity equation for $b$, this yields that the target velocity field must be divergence-free.
So, as anticipated above, our scaling does allow to perform an incompressible limit, while keeping large vaules of the Mach number (here set equal to $1$)
and small values  the bulk viscosity coefficient $\lambda$.

Of course, the above mentioned heuristic argument is not enough to compute the limit of the equations: more details will be given in the next subsection.
Here, we want to spend a few more words on the relevance of the imposed scaling.
As a matter of fact, besides the mathematical interest of this new approach to the incompressible limit, the scaling appearing in \eqref{eq:reduced-MHD},
yielding the large magnetic pressure term $\frac{1}{\veps^2}\nabla b^2$, is physically pertinent in the context
of conducting fluids and their description through the MHD equations. 
Indeed, according to the discussion in Subsection 2.7 of \cite{Bellan}, the only non-trivial situations in which the MHD system finds its full relevance
are either the case in which the magnetic force and the hydrodynamic force are comparable, or the one in which the former is predominant over the latter.
From this viewpoint, our limit $\veps\to0$ can be seen as an investigation of what happens in the second setting, in the limit in which the magnetic forces
are much stronger than the hydrodynamic ones.

\subsection{Overview of the results and strategy of the proof} \label{ss:over}

As mentioned before, given a family of solutions $\big(\rho_\veps,u_\veps,b_\veps\big)_{\veps}$ to system \eqref{eq:reduced-MHD},
the goal of this paper is to prove their convergence to some target state $\big(\rho,u,b\big)$, which we want to characterise
together with its dynamics. We will perform this study in the class of finite energy weak solutions, as defined in \cite{Li-Sun_JDE}, and for general
ill-prepared initial data. We will consider both the case of absence $\k=0$ and presence $\k=1$ of a strong Coriolis force, obtaining qualitatively different target dynamics.
When $\k=0$, we prove convergence to a $3$-D incompressible Navier-Stokes system for the target velocity field $u$;
when $\k=1$, instead, we prove convergence towards a modified
quasi-geostrophic type equation, coupled with a scalar equation for a new quantity $\alpha$ (more details will follow). The precise statements, which represent
the main contributions of this work, are given respectively in Theorems \ref{th:limit} and \ref{th:lim_Coriols}.

We have already explained that the scaling considered here is related to an incompressible limit, for which we expect to get $b\equiv\oline b$ constant and
$\div u = 0$.
Despite the heuristic argument is very simple and seems to impose no special constraints on the target density (which, in principle, could even be
a non-constant function), the coupling of the equations is actually very strong and introduces severe restrictions when rigorously
computing the asymptotics $\veps\to0$.
Roughly speaking, the propagation of acoustic 
waves, generated by the fast time oscillations due to the singular terms in the equations, is governed by the large gradients of the magnetic functions $b_\veps$
and the divergence of the vector fields $W_\veps\,:=\,b_\veps\,u_\veps$.
However, the waves affect the dynamics of the momenta $V_\veps\,:=\,\rho_\veps\,u_\veps$
and, at the same time, it is precisely this quantity which enters into play in the convergence of the convective term $\rho_\veps u_\veps\otimes u_\veps$,
which is the most non-linear quantity appearing in \eqref{eq:reduced-MHD}.
When $\k=1$, the situation is even more complex, because (as already remarked in \tsl{e.g.} \cite{F-G-N, F-G-GV-N, F_PhysD})
the additional penalisation of the Coriolis term strictly couples with the mass conservation balance in the limit $\veps\to0$.
In the end, a crucial point of the proof is to make
the functions $b_\veps$ communicate with the densities $\rho_\veps$, or in other words to compare the velocity fields $W_\veps:=b_\veps\,u_\veps$
with the momenta $V_\veps:=\rho_\veps\,u_\veps$.

In order to solve this issue, 
in the present paper we restrict our attention to density functions which are small perturbations, of size
$O(\veps)$, of a \emph{constant} reference state $\oline\rho$:
\begin{equation} \label{ass:rho}
\rho_{0,\veps}\,=\,\oline\rho\,+\,O(\veps)\,.
\end{equation}
It is not clear to us that this assumption is really strictly necessary to perform the asymptotic analysis for $\veps\to0$,
but it comes into play in a crucial way at least twice, in our study.
Notice that, by resorting to the transport equation satisfied by the quantities $b_\veps/\rho_\veps$, a property
which is well-known in the context of two-fluid systems \cite{V-W-Y}, and thanks to the smallness $b_\veps = \oline b + O(\veps)$ (which follows from the energy
balance), we are able to propagate the property $\rho_\veps = \oline\rho + O(\veps)$ also for later times. This is the key information
to unravel the intricate structure of the wave system and get the sought link between $b_\veps$ and $\rho_\veps$, thus between $W_\veps$ and $V_\veps$.

Actually, this is not completely the end of the story. Resorting to a compensated compactness argument for studying the convergence
of the convective term  $\rho_\veps u_\veps\otimes u_\veps$, a last difficulty appears, encoded by the appearance of a new non-linear term.
Precisely, if we now write the more accurate expressions $b_\veps = \oline b + \veps \beta_\veps$ and $\rho_\veps = \oline\rho + \veps r_\veps$, 
one can see that the new non-linear term arising in the computations depends on the product $r_\veps\,\nabla \beta_\veps$. Thus, for passing to the limit in this term,
some strong convergence property is needed on one of the two quantities. We remark that this issue requires to go one step further with respect to the classical
incompressible limit \tsl{via} the low Mach number regime, for which the limit of the density perturbation functions (the equivalent of the $r_\veps$'s here)
is not, and in general \emph{cannot}, be determined. 
Once again, what makes the difference in our study is exactly the above assumption \eqref{ass:rho}
together with the fact that this property propagates also for later times. Indeed, using the structure (reminiscent of the previously
mentioned transport property for the quotients $b_\veps/\rho_\veps$) of the continuity equations and of the solutions $b_\veps$ and $\rho_\veps$, one can
prove that the quantities $\alpha_\veps\,:=\,\oline\rho\,\beta_\veps\,-\,\oline b\,r_\veps$ are compact (in suitable topologies), so they strongly converge
to some scalar quantity $\alpha$ (precisely the function $\alpha$ named at the beginning of this subsection).
This is the crucial information which allows us to pass to the limit in the new non-linear term $r_\veps\,\nabla \beta_\veps$ and finally prove
the convergence results when $\veps\to0$.

\medbreak
To conclude this part, let us mention that, to the best of our knowledge, the scaling proposed in \eqref{eq:reduced-MHD} and the mechanism (depicted above) leading
to the incompressible limit are \emph{new}. In addition, we belive that, although presented only for the magnetic pressure term $\nabla b^2$,
this method could certainly be implemented in presence of different power laws, more pertinent in the context of two-fluid equations.

For systems keeping the same structure of our system \eqref{eq:reduced-MHD}, we are aware of only a few studies \cite{F-K-N-Z, Li-Mu} on the incompressible limit,
which however were performed in the context of the classical low Mach number regime. In particular, paper \cite{Li-Mu} considers the same penalisation for both pressure
gradients arising in the momentum equation; this authomatically implies that the two densities must be small perturbations of constant states and the singular
limit can be studied by resorting to somehow classical arguments.
Work \cite{F-K-N-Z}, instead, considers a sort of non-linear pressure law $P(\rho,\Theta) = \left(\rho\,\Theta\right)^\g$,
still penalised by a pre-factor $\veps^{-2}$ like in the classical low Mach number regime. Here, $\Theta$
plays the role of the temperature, which however is simply advected by the velocity fields (more precisely,
the quantity $\rho\,\Theta$ satisfies a continuity equation) and the thermal diffusion is neglected.
Importantly, and similarly to our strategy here, the authors in \cite{F-K-N-Z} need to resort to the transport equation for the temperature $\Theta$
(which has the same role of the quantity $b_\veps/\rho_\veps$ in this paper), but they need to impose
that its variations with respect to a given constant reference state are $O(\veps^2)$.
Compared to their assumption, here we require only $O(\veps)$ perturbations instead; actually, this is not really an assumption, but simply
follows from the condition imposed on the initial densities \eqref{ass:rho} and the analogous one (forced by the penalisation of the magnetic pressure)
for the initial magnetic functions $b_\veps$.

{\small
\section*{Acknowledgements}
F.F. has been partially supported by the project CRISIS (ANR-20-CE40-0020-01), operated by the French National Research Agency (ANR),
by the Basque Government through the BERC 2022-2025 program and by the Spanish State Research Agency through the BCAM Severo Ochoa excellence accreditation
CEX2021-001142. The author also aknowledges the support of the European Union through the COFUND program [HORIZON-MSCA-2022-COFUND-101126600-SmartBRAIN3].

}

\section{Problem formulation and main results} \label{s:result}

In this section, we specify our working setting and state our main result. We start by collecting, in Subsection \ref{ss:data}, the assumptions
on the initial data. In Subsection \ref{ss:weak-sol} we spend a few words on weak solutions to problem \eqref{eq:reduced-MHD}-\eqref{hyp:pressure}-\eqref{hyp:S}.
Finally, in Subsection \ref{ss:result} we present the statement of the main results of the paper, namely Theorems \ref{th:limit} and \ref{th:lim_Coriols}.

From now on, we adopt the following notation: given a normed space $X$ and a sequence $\big(f_\veps\big)_\veps\subset X$, we write
$\big(f_\veps\big)_\veps\Subset X$ if the sequence is bounded in $X$, meaning that there exists a constant $C>0$ such that
$\sup_{\veps}\left\|f_\veps\right\|_{X}\leq C$.

\subsection{Initial data} \label{ss:data}
In this work, we are interested in performing the limit $\veps\ra0^+$ in system \eqref{eq:reduced-MHD} for general \emph{ill-prepared} initial data.
More precisely, we are going assume that the initial data are small perturbations of a given static state, but we do not impose any special structure on those
perturbations. For simplicity, we are going to consider \emph{constant} static states; however, this is not the only possible choice.

Given any $\veps\in\,]0,1]$ fixed, we supplement system \eqref{eq:reduced-MHD} with the initial datum
\begin{equation} \label{eq:in-data}
 \big(\rho,u,b\big)_{|t=0}\,=\,\big(\rho_{0,\veps}, u_{0,\veps}, b_{0,\veps}\big)\,.
\end{equation}
We assume the following hypotheses on the family $\big(\rho_{0,\veps}, u_{0,\veps}, b_{0,\veps}\big)_{\veps\in\,]0,1]}$.

\paragraph*{Initial densities.} The initial densities $\rho_{0,\veps}$ are assumed to be positive measurable functions, \tsl{i.e.} $\rho_{0,\veps}\geq0$ a.e.
on $\Omega$, such that they are small perturbations of the static state $\oline{\rho}\in\R_+\setminus\{0\}$:
\[
 \rho_{0,\veps}\,=\,\oline\rho\,+\,\veps\,r_{0,\veps}\,,\qquad\qquad \mbox{ with }\qquad \big(r_{0,\veps}\big)_{\veps}\,\Subset\,L^2(\Omega)\cap L^\infty(\Omega)\,.
\]

Observe that, $\Omega$ being a bounded domain here, the $L^2$ condition is redundant. However, we impose it to stress that our study can be performed also
in more general domains with no essential modifications, \tsl{e.g.} in the case $\Omega=\R^3$ when $\k=0$ and $\Omega=\R^2\times\,]0,1[\,$ for $\k=1$. We refrain from
doing so here, only because we lack of a solid existence theory of finite energy weak solutions, however we do want to keep a general
point of view.

\paragraph*{Initial magnetic fields.} As for the initial magnetic fields $b_{0,\veps}$, they are assumed to be positive measurable functions as the density
functions, namely $b_{0,\veps}\geq0$ a.e. on $\Omega$; in addition, forced by the scaling in equations \eqref{eq:reduced-MHD},
we assume that they are small perturbations of some constant reference state
$\oline{b}>0$:
\[
b_{0,\veps}\,=\,\oline b\,+\,\veps\,\beta_{0,\veps}\,,\qquad\qquad \mbox{ with }\qquad \big(\beta_{0,\veps}\big)_{\veps}\,\Subset\,L^2(\Omega)\cap L^\infty(\Omega)\,.
\]
In fact, the sign of the constant $\oline b$ is not really important in our analysis. We take it positive just to be consistent with the interpretation
of system \eqref{eq:reduced-MHD} as a  of two-fluid model, in which case the unknown $b$ plays the role of a second density.

In the proof of our results, we will need to identify target profiles for the density
perturbations $\big(r_{0,\veps}\big)_\veps$ and magnetic field perturbations $\big(\beta_{0,\veps}\big)_\veps$.
Without loss of generality, we can assume that there exist $r_0$ and $\beta_0$, both belonging to $L^2(\Omega)\cap L^\infty(\Omega)$, such that
the whole sequences converge to those profiles:
\begin{equation}  \label{conv:r-beta_init}
 r_{0,\veps}\,\stackrel{*}{\rightharpoonup}\,r_0\qquad \mbox{ and }\qquad \beta_{0,\veps}\,\stackrel{*}{\rightharpoonup}\,\beta_0
 \qquad \qquad \mbox{ in } \qquad L^2(\Omega)\cap L^\infty(\Omega)\,.
\end{equation}

\paragraph*{Initial velocity fields.} 
The initial velocity fields $u_{0,\veps}$ are assumed to simply satisfy the uniform boundedness property
\[
\big(u_{0,\veps}\big)_\veps\,\Subset\,L^2(\Omega)\,.
\]
Without loss of generality, we can assume that there exists a velocity field $u_0\in L^2(\Omega)$ such that
\begin{equation} \label{conv:u_0}
 u_{0,\veps}\,\rightharpoonup\,u_0\qquad\qquad \mbox{ in }\qquad L^2(\Omega)\,.
\end{equation}
Again, we notice that it is not restrictive, at this stage, to assume that the whole sequence of $u_{0,\veps}$'s is converging to this profile.

\paragraph*{An additional key property.}

Observe that, as a consequence of the form of $\rho_{0,\veps}$ and $b_{0,\veps}$, we can compute
\[
\frac{b_{0,\veps}}{\rho_{0,\veps}}\,-\,\frac{\oline b}{\oline\rho}\,=\,\frac{\veps}{\oline\rho\,\rho_{0,\veps}}\,\left(\oline\rho\,\beta_{0,\veps}\,-\,
\oline b\,r_{0,\veps}\right)\,.
\]
Owing to the previous assumptions on the families $\big(r_{0,\veps}\big)_\veps$ and $\big(\beta_{0,\veps}\big)_\veps$, we can assume withtout loss of generality
that $\rho_{0,\veps}$ remains strictly positive for any $\veps\in\,]0,1]$: more precisely, there exists a constant $\rho_*>0$, independent of $\veps\in\,]0,1]$,
such that one has $\rho_{0,\veps}\geq\rho_*>0$ a.e. in $\Omega$.

Therefore, it follows that there exists a constant $C_0>0$, only depending on this constant $\rho_*$ and on $\oline\rho$ and $\oline b$, such that
\begin{equation} \label{est:b-r_in}
\forall\,\veps\in\,]0,1]\,,\qquad\qquad \frac{1}{\veps}\left|\frac{b_{0,\veps}}{\rho_{0,\veps}}\,-\,\frac{\oline b}{\oline \rho}\right|\,\leq\,C_0\qquad \mbox{ a.e. in }\ \Omega\,.
\end{equation}

\begin{rem} \label{r:key-prop}
 In the theory of two-fluid flows (see \tsl{e.g.} \cite{V-W-Y, N-P, Li-Sun, Li-Sun_JDE}), it is customary to assume the weaker condition
\[
\forall\,\veps\in\,]0,1]\,,\qquad\qquad 0\,<\,C_1\,\leq\,\frac{b_{0,\veps}}{\rho_{0,\veps}}\,\leq\,C_2\qquad \mbox{ a.e. in }\ \Omega\,.
\]
This assumption was also formulated in \cite{F-K-N-Z} in a study of a singular limit problem related to ours (see the explanations in Subsection \ref{ss:over}).
However, this assumption is not enough for our scopes; instead, we need to use the stronger condition \eqref{est:b-r_in}.

We stress the fact that 
\eqref{est:b-r_in} is a consequence of our assumptions, but it uses the form $\rho_{0,\veps} = \oline\rho + O(\veps)$ in a crucial way. This is one of the main
reasons evocated in the Introduction for imposing such structure on the initial density profiles.
\end{rem}

\subsection{Finite energy weak solutions} \label{ss:weak-sol}

The assumptions on the initial data $\big(\rho_{0,\veps}, u_{0,\veps}, b_{0,\veps}\big)_\veps$ having been clarified, let us give the definition of weak solutions
relevant for our study. As this definition is strongly based on the notion of \emph{finite energy}, let us begin by spending a few words on presenting the energy
functional associated to a solution of system \eqref{eq:reduced-MHD}.

First of all, we define the internal energy function $H$ (often dubbed \emph{pressure potential}) as the solution of the ODE
\[
\rho\,H'(\rho)\,-\,H(\rho)\,=\,P(\rho)\,.
\]
Notice that $H''(\rho)\,=\,P'(\rho)/\rho$, so, owing to assumption \eqref{hyp:pressure} on $P$, we deduce that $H$ is a convex function on $[0,+\infty[\,$.
From now on, we fix the choice
\begin{equation} \label{def:H}
 H(\rho)\,:=\,\rho\,\int^\rho_{\oline\rho}\frac{P(z)}{z^2}\,\dd z\,.
\end{equation}
We also immediately introduce the function
\begin{equation} \label{def:H_rel}
\mc H\left(\rho\,\big|\,\oline\rho\right)\,:=\,H(\rho)\,-\,H(\oline\rho)\,-\,H'(\oline\rho)\,\big(\rho-\oline\rho\big)
\end{equation}
to denote the \emph{Bregman divergence} associated to the convex function $H$. The function $\mc H\big(\rho\,|\,\oline\rho\big)$
is a measure of the density departures from the static value $\oline\rho$.

Observe that, starting from the relation $H''(\rho)\,=\,P'(\rho)/\rho$ and imposing the additional conditions $H(\oline\rho)=H'(\oline\rho)=0$,
one would find a modified function $\wtilde H$ with respect to $H$, which already measures the density variations from the value $\oline\rho$ and
is in fact equivalent to the Bregman divergence $\mc H\big(\rho\,\big|\,\oline\rho\big)$.
For instance, in the classical case of Boyle's pressure law $P(\rho)\,=\,\rho^\g/\g$, with $\g>1$, we would find the explicit expression
\[
\wtilde H(\rho)\,=\,\wtilde H_\g(\rho)\,=\,\frac{1}{\g(\g-1)}\,\Big(\rho^\g\,-\,\oline\rho^\g\,-\,\g\,\oline\rho^{\g-1}\,\big(\rho-\oline\rho\big)\Big)\,=\,
\frac{1}{(\g-1)}\,\mc H\big(\rho\,\big|\,\oline\rho\big)\,.
\]
Applying the same argument (with $\g=2$) to the magnetic pressure function $P_{\rm magn}(b)\,=\,b^2/2$, we find the simple formula
\begin{equation} \label{def:B}
\mc B\big(b\,\big|\,\oline b\big)\,:=\,\wtilde H_2(b)\,=\,\frac{1}{2}\,\left(b-\oline b\right)^2\,.
\end{equation}

Then, we can define the energy of a triplet $\big(\rho,u,b\big)$ as
\begin{equation} \label{def:E}
\mc E\big(\rho,u,b\big)\,:=\,\frac{1}{2}\int_{\Omega}\rho\,|u|^2\,\dx\,+\,\int_{\Omega}\mc H\big(\rho\,\big|\,\oline\rho\big)\,\dx\,+\,
\frac{1}{\veps^2}\int_\Omega \mc B\big(b\,\big|\,\oline b\big)\,\dx\,.
\end{equation}
Whenever $\big(\rho,u,b\big)$ is a solution to \eqref{eq:reduced-MHD} related to some initial datum $\big(\rho_0,u_0,b_0\big)$ and defined on $\R_+\times\Omega$,
for $t>0$ we set
\[
\mc E\big(\rho,u,b\big)(t)\,:=\,\mc E\big(\rho(t),u(t),b(t)\big)\,.
\]
For simplicity, we will often use the notation $\mc E\big(\rho,u,b\big)(0)\,:=\,\mc E\big(\rho_0,u_0,b_0\big)$.

After these preliminaries, we can give the definition of \emph{finite energy weak solution} to system \eqref{eq:reduced-MHD}.
We adapt the definition from to \cite{Li-Sun_JDE} to our framework.

\begin{defin} \label{def:weak}
Let $T>0$ be given and $\Omega=\T^3$.
Let $\big(\rho_0,u_0,b_0\big)$ be a triplet of initial data satisfying the assumptions formulated in Subsection \ref{ss:data}
(for a fixed $\veps>0$), with in addition $\rho_0\geq0$ and $b_0\geq0$.

A triplet $\big(\rho,u,b\big)$ is said to be a \emph{finite energy weak solution} to system \eqref{eq:reduced-MHD}
on $[0,T[\,\times\Omega$, related to the initial datum $\big(\rho_0,u_0,b_0\big)$, if the following conditions are satisfied:
\begin{itemize}
 \item $\rho\geq0$ and $\rho-\oline\rho\in L^\infty\big([0,T[\,;L^{\min\{2,\g\}}(\Omega)\big)$;
 \item $b\geq0$ and $b-\oline b\in L^\infty\big([0,T[\,;L^{2}(\Omega)\big)$;
 \item $\sqrt{\rho}\,u\in L^\infty\big([0,T[\,;L^{2}(\Omega)\big)$ and $\nabla u\in L^2\big([0,T[\,;L^2(\Omega)\big)$;
 \item the equations in \eqref{eq:reduced-MHD} are satisfied in the sense of $\mc D'\big([0,T[\,\times\Omega\big)$ and, in addition, the equations for $\rho$ and $b$
 hold true in a renormalised sense: for any $h\in C^1(\R)$ such that $h'(z)=0$ for sufficiently large values of $z$, 
one has, for $\mf z\in\{\rho,b\}$, the relation
\[
 \d_th(\mf z)\,+\,\div\big(h(\mf z)\,u\big)\,+\,\left(h'(\mf z)\,\mf z\,-\,h(\mf z)\right)\,\div u
 \,=\,0\qquad\quad \mbox{ in }\qquad \mc D'\big([0,T[\,\times\Omega\big)\,;
\]
 \item the energy inequality
\[
\mc E\big(\rho,u,b\big)(t)\,+\,\int^t_0\int_\Omega\mbb S(\D u):\D u\,\dx\,\dd\tau\,\leq\,\mc E\big(\rho_0,u_0,b_0\big)
\]
holds true for a.e. $t\in[0,T[\,$.
\end{itemize}

If the above conditions hold with $T=+\infty$, then the solution is said to be \emph{global}.
\end{defin}

Strictly speaking, the requirement $b\geq 0$ is not present in \cite{Li-Sun_JDE}, as in that work the variable $b$ represented the magnetic field
and no sign condition was formulated on the initial datum.
Owing to our assumption on $b_{0}\geq0$, however, we can easily guarantee that $b\geq0$, precisely as it is done for the density $\rho$.

The existence of finite energy weak solutions to system \eqref{eq:reduced-MHD} was proved in the same work \cite{Li-Sun_JDE} in a two-dimensional setting,
and for the case of Boyle's pressure laws $P(\rho)\,=\,\rho^\g$, with $\g>1$.
Here we report the adaptation of that result to a $3$-D domain. Notice that, in this case, no further restriction on the adiabatic exponent $\g>1$
appears, thanks to the special structure of the two-fluid type system \eqref{eq:reduced-MHD} and the interplay between $b$ and $\rho$.

In \cite{Li-Sun_JDE}, the authors proved the existence of global in time finite energy weak solutions to the reduced MHD model in a smooth bounded domain
of $\R^2$; in \cite{Li-Sun}, the same authors extended their result to heat-conducting fluids. Strictly speaking, the results were obtained for classical
barotropic pressure laws $P(\rho) = A\,\rho^\g$, however the analysis can be accomodated (by performing fairly standard modifications) to deal with more general
pressure lwas of the form of \eqref{hyp:pressure}. Analogously, the two-dimensional assumption was only needed to get the special form of the sytem, while a similar
study could be arried out in any bounded domain of $\R^3$.

\begin{thm} \label{th:existence}
Let $\Omega=\T^3$ and take $\g>1$ in \eqref{hyp:pressure}. Assume that the initial datum $\big(\rho_0,u_0,b_0\big)$ satisfy the conditions
\[
0\,<\,K_0\,\leq\,\rho_0\,,\,b_0\,\leq\,K_1\qquad\qquad \mbox{ and }\qquad\qquad u_0\,\in\,L^2(\Omega)\,,
\]
for two suitable positive constants $K_0\leq K_1$.

Then, there exists one global finite energy weak solution $\big(\rho,u,b\big)$ to system \eqref{eq:reduced-MHD}, in the sense of Definition \ref{def:weak}.
In addition, the quantity $Z$, defined as
\[
 Z\,=\,\left\{\begin{array}{ll}
               \dfrac{b}{\rho} &\qquad \mbox{ if }\quad \rho>0\,, \\[2ex]
               1 &\qquad \mbox{ if }\quad \rho=0\,,
              \end{array}
              \right.
\]
belongs to $L^\infty\big(\R_+\times\Omega\big)$ and satisfies (in the weak sense) the transport equation
\[
 \d_tZ\,+\,u\cdot\nabla Z\,=\,0\,,\qquad\qquad Z_{|t=0}\,=\,\frac{b_0}{\rho_0}\,.
\]
\end{thm}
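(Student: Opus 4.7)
The result is obtained by adapting the Lions-Feireisl construction of finite energy weak solutions for compressible Navier-Stokes to the two-fluid structure of \eqref{eq:reduced-MHD}. I would set up the standard multi-level approximation scheme: a Faedo-Galerkin truncation for the momentum equation, parabolic regularisations $\eta\Delta\rho$ and $\eta\Delta b$ of the two continuity equations, and an artificial pressure $\delta\rho^\Gamma$ for $\Gamma$ sufficiently large. A fixed-point argument then produces smooth, strictly positive approximations $(\rho_n,u_n,b_n)$. The Coriolis term contributes nothing to the energy balance since $e_3\times V\cdot V=0$, so the singular prefactor $\k/\veps$ is inert at this stage; testing the approximate momentum equation with $u_n$ and exploiting both continuity equations yields the energy inequality and the uniform bounds listed in Definition \ref{def:weak}.

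The decisive new ingredient compared with the scalar case is the \emph{two-fluid transport structure}. Subtracting a suitable multiple of the continuity equation for $\rho$ from that for $b$ one formally obtains
\[
\rho\,\big(\d_tZ+u\cdot\nabla Z\big)\,=\,0\,,\qquad Z\,:=\,\frac{b}{\rho}\,,
\]
and the DiPerna-Lions renormalisation theorem (applicable thanks to $u\in L^2_tH^1_x$) turns this formal identity into a genuine transport equation for $Z$, once one extends $Z\equiv 1$ on $\{\rho=0\}$. The maximum principle for the transport equation then provides the uniform bounds $K_0/K_1\leq Z_n\leq K_1/K_0$, which are preserved through every limit passage; in particular, it yields the pointwise comparison $C_1\,\rho_n\leq b_n\leq C_2\,\rho_n$.

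Routine weak compactness produces limits $(\rho,u,b)$ in the correct functional classes, but passing to the limit in the nonlinear fluxes $P(\rho_n)$, $b_n^2$ and $\rho_n u_n\otimes u_n$ demands strong convergence of both densities. Here the comparability $b_n\simeq\rho_n$ is crucial: writing $b_n^2=Z_n^2\,\rho_n^2$ and using that $Z_n$ is strongly compact, being transported along $u_n$, the combined pressure $P(\rho_n)+b_n^2/(2\veps^2)$ is effectively reduced to a single-density pressure with a strongly compact coefficient. One can then run the Lions-Feireisl effective viscous flux argument, combined with the renormalised continuity equation for $\rho_n$, to obtain $\overline{\rho\log\rho}=\rho\log\rho$, hence $\rho_n\to\rho$ strongly in $L^1$; strong convergence of $b_n=Z_n\,\rho_n$ follows immediately. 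The successive limits $n\to\infty$, $\eta\to 0$ and $\delta\to 0$ are then carried out along these lines, the transport equation for $Z$ surviving each of them. The main obstacle I foresee is precisely the derivation of the effective viscous flux identity in the two-fluid setting: the Lions commutator argument must be adapted to accommodate the magnetic pressure contribution, and this is where the strong compactness of $Z_n$ together with the two-sided bound $C_1\,\rho_n\leq b_n\leq C_2\,\rho_n$ coming from the transport of $Z$ play the pivotal role.
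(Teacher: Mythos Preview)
Your sketch is correct and captures the approach of \cite{Li-Sun_JDE}, which is exactly what the paper does here: Theorem~\ref{th:existence} is not proved in the paper but is quoted from \cite{Li-Sun_JDE} (with the remark that the adaptation from a smooth bounded $2$-D domain to $\T^3$ and from Boyle's law to the general pressure \eqref{hyp:pressure} is standard). The only point the paper adds beyond the citation is the claim about $Z$, which it says follows either from the strong convergence of $\rho$ and $b$ at the level of the approximate solutions, or alternatively from Lemma~6.1 of \cite{F-K-N-Z}; your derivation of the transport equation for $Z$ via DiPerna--Lions renormalisation and the maximum principle is in line with this.
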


To conclude, we point out that the last sentence concerning the function $Z$ is not contained in the statement of \cite{Li-Sun_JDE} (see Theorem 1.1 and Remark 1.1
therein). However, it can be easily deduced by following the steps of the proof and using the strong convergence properties of $\rho$ and $b$
at the level of the smooth approximate solutions (alternatively, one can use Lemma 6.1 of \cite{F-K-N-Z}).

\subsection{Main results} \label{ss:result}

After the previous preparatory work, we can now state the main results of this paper.

Let us start by considering the case $\k=0$ (no Coriolis force). Because of technical reasons arising in Subsection \ref{ss:bounds_energy},
we need to assume the adiabatic exponent $\g$ to satisfy $\g>3/2$ here (as our domain is three-dimensional). The assumption $\g>1$ would be enough
in the case we were working on the two-dimensional torus, or in case we had imposed some smallness condition on the constant $C_0$
appearing in condition \eqref{est:b-r_in} (see Remark \ref{r:two-fluid} for more comments about this).

\begin{thm} \label{th:limit}
Consider system \eqref{eq:reduced-MHD} set on the domain $\Omega=\T^3$.
Assume that $\g>3/2$ in \eqref{hyp:pressure}.
Consider a sequence of initial data $\big(\rho_{0,\veps}, u_{0,\veps}, b_{0,\veps}\big)_\veps$
verifying the assumptions listed in Subsection \ref{ss:data}. Let $\big(\rho_\veps,u_\veps,b_\veps\big)_\veps$ be a sequence of global in time finite energy weak solutions
related to those data, according to Theorem \ref{th:existence}.
Let $u_0\in L^2(\Omega)$ be identified as in \eqref{conv:u_0}.

Then, in the limit $\veps\to0$, one has the strong convergence properties
\[
 \rho_\veps\,\longrightarrow\,\oline\rho\qquad \mbox{ and }\qquad b_\veps\,\longrightarrow\,\oline b\qquad\qquad \mbox{ in }\qquad 
L^\infty\big(\R_+;L^{\min\{2,\g\}}(\Omega)\big)\,.
\]
In addition, there exists a vector field $u\in L^\infty\big(\R_+;L^2(\Omega)\big)\cap L^2_\loc\big(\R_+;H^1(\Omega)\big)$,
with $\div u =0$, such that, up to the extraction of a suitable
subequence, in the limit $\veps\to0$ one has $u_\veps\,\stackrel{*}{\rightharpoonup}\,u$ in $L^2_\loc\big(\R_+;H^1(\Omega)\big)$
and $\sqrt{\rho_\veps}\,u_\veps\,\stackrel{*}{\rightharpoonup}\,\sqrt{\oline\rho}\,u$ in $L^\infty\big(\R_+;L^2(\Omega)\big)$.
Finally, the vector field $u$ solves, in the weak sense, the incompressible Navier-Stokes system
\[
 \left\{\begin{array}{l}
         \d_tu\,+\,\div\big(u\otimes u\big)\,+\,\nabla\Pi\,-\,\frac{\mu}{\oline\rho}\,\Delta u\,=\,0 \\[1ex]
         \div u\,=\,0
        \end{array}
\right.
\]
related to the initial datum $u_0$, for a suitable pressure gradient $\nabla\Pi$.
\end{thm}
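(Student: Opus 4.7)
My plan is to start by extracting from the energy inequality of Definition \ref{def:weak}, together with the hypotheses on the initial data listed in Subsection \ref{ss:data}, the standard uniform bounds: $(\sqrt{\rho_\veps}\,u_\veps)_\veps \Subset L^\infty(\R_+;L^2(\Omega))$, $(\nabla u_\veps)_\veps \Subset L^2(\R_+;L^2(\Omega))$, the Bregman divergence $\mc H(\rho_\veps\,|\,\oline\rho)$ bounded in $L^\infty(\R_+;L^1(\Omega))$, and $\veps^{-1}(b_\veps - \oline b)$ bounded in $L^\infty(\R_+;L^2(\Omega))$. From the growth condition \eqref{hyp:pressure} on the pressure potential we then deduce the strong convergences $\rho_\veps\to\oline\rho$ in $L^\infty(\R_+;L^{\min\{2,\g\}}(\Omega))$ and $b_\veps\to\oline b$ in $L^\infty(\R_+;L^2(\Omega))$, the latter with explicit rate $O(\veps)$. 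Writing $\rho_\veps = \oline\rho + \veps\,r_\veps$ and $b_\veps = \oline b + \veps\,\beta_\veps$, the uniform $L^\infty_t L^2_x$ bound on $(r_\veps)_\veps$ is precisely where the assumption $\g>3/2$ enters. Weak compactness gives $u_\veps \upra u$ in $L^2_\loc(\R_+;H^1(\Omega))$ and $\sqrt{\rho_\veps}\,u_\veps \upra \sqrt{\oline\rho}\,u$ in $L^\infty(\R_+;L^2(\Omega))$ up to extraction, and passing to the limit in $\d_t b_\veps + \div(b_\veps u_\veps) = 0$ the strong convergence $b_\veps\to\oline b>0$ immediately yields $\div u = 0$.

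\textbf{Step 2: Transport equation and compactness of $\alpha_\veps$.} To handle the nonlinearity $\div(\rho_\veps u_\veps\otimes u_\veps)$, the second step is to exploit the transport equation $\d_t Z_\veps + u_\veps\cdot\nabla Z_\veps = 0$ for $Z_\veps := b_\veps/\rho_\veps$ granted by Theorem \ref{th:existence}. Thanks to a standard DiPerna--Lions renormalisation (applicable since $u_\veps \in L^2_t H^1_x$), the initial pointwise bound \eqref{est:b-r_in} propagates for all times, so $|Z_\veps(t,x) - \oline b/\oline\rho| \le C_0\,\veps$ a.e. Combined with the propagation of a uniform positive lower bound on $\rho_\veps$, this produces the crucial pointwise comparison $\oline\rho\,\beta_\veps - \oline b\,r_\veps = O(1)$ in $L^\infty_{t,x}$, which essentially links $b_\veps$ and $\rho_\veps$ at leading order. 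A direct computation combining the (renormalised) continuity equations for $\rho_\veps$ and $b_\veps$ then yields, \emph{without any remainder}, the transport equation
\[
 \d_t \alpha_\veps \,+\, \div(\alpha_\veps\,u_\veps) \,=\, 0\,, \qquad \alpha_\veps \,:=\, \oline\rho\,\beta_\veps \,-\, \oline b\,r_\veps\,.
\]
Since $(\alpha_\veps)_\veps$ is uniformly bounded in $L^\infty_{t,x}$ and the flux $\alpha_\veps u_\veps$ is bounded in $L^2_t L^q_x$ for some $q>1$, an Aubin--Lions argument gives strong precompactness of $(\alpha_\veps)_\veps$ in $C_\loc(\R_+;H^{-\sigma}(\Omega))$ for some $\sigma>0$; denote by $\alpha$ its limit.

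\textbf{Step 3: Acoustic wave analysis and identification of the limit.} To identify the limit of the convective term, I would recast the mass, magnetic and momentum equations as an acoustic wave system governing the fast time oscillations produced by the singular pressure $\veps^{-2}\nabla b_\veps^2$. The link $\oline\rho\,b_\veps \simeq \oline b\,\rho_\veps + \veps\,\alpha_\veps$ from Step 2 is exactly what allows to replace $W_\veps = b_\veps u_\veps$ by an expression in terms of $V_\veps = \rho_\veps u_\veps$ in the singular terms and close the system. Testing the momentum equation against a divergence-free test function cancels the singular pressure contributions, and the oscillatory part of the convective term is then treated following the Lions--Masmoudi compensated-compactness scheme, producing a gradient which is absorbed in the effective pressure $\nabla\Pi$. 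The genuinely new contribution is a quadratic term of the form $r_\veps\nabla\beta_\veps$: using the identity $\oline\rho\,\beta_\veps = \alpha_\veps + \oline b\,r_\veps$ it splits as $r_\veps\nabla\alpha_\veps/\oline\rho$ plus a total gradient (reabsorbed in $\nabla\Pi$), and the first piece passes to the limit thanks to the strong convergence of $\alpha_\veps$ paired with the weak convergence of $r_\veps$. Collecting everything yields the incompressible Navier--Stokes system for $u$ with effective viscosity $\mu/\oline\rho$, and the initial condition $u|_{t=0} = u_0$ is recovered from the weak-$*$ time continuity of the momentum together with \eqref{conv:u_0}.

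\textbf{Main obstacle.} I expect the main obstacle to be the treatment of the new nonlinear term $r_\veps\nabla\beta_\veps$ in Step 3: unlike in the classical low-Mach framework where $\nabla P(\rho_\veps)$ and $\veps^{-2}\nabla b_\veps^2$ would both be penalised simultaneously and only the weak limits of the perturbations would be needed, here only the magnetic pressure is singular, so $r_\veps$ and $\beta_\veps$ live at different orders of magnitude and no weak-weak argument can handle their product. The whole strategy therefore hinges on the strong compactness of the specific combination $\alpha_\veps = \oline\rho\,\beta_\veps - \oline b\,r_\veps$, which itself relies crucially on the $O(\veps)$ initial density assumption \eqref{ass:rho} being propagated to later times via the transport equation for $b_\veps/\rho_\veps$.
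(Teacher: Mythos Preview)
Your overall strategy matches the paper's: energy bounds, propagation of $O(\veps)$ smallness of $\rho_\veps-\oline\rho$ via the transport equation for $b_\veps/\rho_\veps$, compactness of $\alpha_\veps=\oline\rho\,\beta_\veps-\oline b\,r_\veps$, and compensated compactness for the convective term. Three points need correction.

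\textbf{Ordering and role of $\gamma>3/2$.} The strong convergence $\rho_\veps\to\oline\rho$ with rate $O(\veps)$ cannot be extracted from the energy in Step~1: the energy only gives $\mc H(\rho_\veps|\oline\rho)\in L^\infty_tL^1_x$, hence $\rho_\veps-\oline\rho$ \emph{bounded} in $L^{\min\{2,\g\}}$ but with no smallness. The $O(\veps)$ rate is precisely the output of the transport argument you place in Step~2; this is the whole point of the ``induced low Mach number effect''. Also, $\gamma>3/2$ is not used for the bound on $r_\veps$ but to pass from the energy bound on $\sqrt{\rho_\veps}\,u_\veps$ and $\nabla u_\veps$ to $(u_\veps)_\veps\Subset L^2_\loc(\R_+;H^1)$ (and later in the Aubin--Lions step for $\alpha_\veps$).

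\textbf{No lower bound on $\rho_\veps$.} For the finite energy weak solutions of Definition~\ref{def:weak} there is \emph{no} uniform positive lower bound on $\rho_\veps$, so your claimed $L^\infty_{t,x}$ bound on $\alpha_\veps$ is not available. The paper instead uses the essential/residual decomposition of $\rho_\veps$ to get $|\alpha_\veps|\leq C\,\rho_\veps$ pointwise, whence only $(\alpha_\veps)_\veps\Subset L^\infty(\R_+;L^{\min\{2,\g\}})$. This is still enough for the Aubin--Lions compactness.

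\textbf{Regularisation and the final product.} You omit the frequency cut-off regularisation $\mc L_M$, but without it the product $r_\veps\,\nabla\alpha_\veps$ in your Step~3 is ill-behaved: the strong compactness of $\alpha_\veps$ holds only in a negative Sobolev space, so $\nabla\alpha_\veps$ is strong in $H^{-\sigma-1}$ and does not pair with $r_\veps\in L^m$. The paper regularises first, writes the key term as $\alpha_{\veps,M}\,\nabla\beta_{\veps,M}$ (gradient on $\beta$, not on $\alpha$), passes to the limit by strong~$\times$~weak convergence, and then uses that the limit profile $\beta$ depends only on $t$ (a consequence of the constraint analysis, cf.\ \eqref{eq:beta-triv}) so that $\nabla\beta_M\equiv0$. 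Your alternative decomposition $r_\veps\nabla\alpha_\veps$ would also work \emph{after regularisation} (since $\nabla\alpha=-\oline b\,\nabla r$ and $r\nabla r$ is a gradient), but you need to say this and to insert the regularisation step.
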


Before moving on, let us observe the following important point. 

\begin{rem} \label{r:beyond}
Although this does not appear in the statement, for proving Theorem \ref{th:limit} it will be crucial
not only to use that $b_\veps = \oline b + O(\veps)$ (which follows from the scaling) and $\rho_\veps = \oline\rho + O(\veps)$ (which is
the induced low Mach number effect mentioned in the Introduction), but also to \emph{identify} the limit profiles of those $O(\veps)$ perturbations.

This is very different from the classical case of incompressible or anelastic limits (see \tsl{e.g.} \cite{L-Masm, F-N, F-K-N-Z, F-Z} and references therein),
in which in fact such a target profile not only is not needed, but in fact \emph{cannot} be identified. In our work,
this is made possible by the use of an additional transport structure underlying our system of equations (this is strictly related to the transport
equation for $Z$ appearing in Theorem \ref{th:existence}). We refer to the beginning of Subsection \ref{ss:hidden} for more explanations about this point.
\end{rem}

\medbreak
We then consider the case $\k=1$, where the incompressible limit combines with the fast rotation limit. 
In order to state our result, some preliminary notation is in order. For any vector $V = (V_1,V_2,V_3)\in\R^3$, we denote by $V_h$ the two-dimensional vector
composed of only the first two components of $V$, namely $V_h = (V_1,V_2)$. Similarly, we denote by $\nabla_h$ and $\divh$ the usual
differential operators acting only on the variables $x_h=(x_1,x_2)$: for instance,
$\divh V_h = \d_1V_1 + \d_2V_2$. Moreover, given a two-dimensional vector $v=(v_1,v_2)\in\R^2$, we denote by $v^\perp$ its rotation of angle $\pi/2$, that is
$v^\perp = (-v_2, v_1)$. Analogously, we define the rotated horizontal gradient $\nabla_h^\perp = (-\d_2,\d_1)$.
Finally, for any function $f = f(t,x)$, we denote $\lan f\ran = \lan f\ran(t,x_h)$ its vertical average, namely we define
$\lan f\ran(t,x_h)\,:=\,\int_0^1f(t,x_h,z)\,\dd z$.

In the case $\k=1$, we have the following convergence result. 

\begin{thm} \label{th:lim_Coriols}
Consider system \eqref{eq:reduced-MHD} set on the domain $\Omega=\T^2\times\,]0,1[\,$, supplemented with the complete-slip boundary conditions \eqref{bc:compl-slip}.
Assume that \eqref{hyp:pressure} holds true with $\g>3/2$.
Consider a sequence of initial data $\big(\rho_{0,\veps}, u_{0,\veps}, b_{0,\veps}\big)_\veps$
verifying the assumptions listed in Subsection \ref{ss:data}. Let $\big(\rho_\veps,u_\veps,b_\veps\big)_\veps$ be a sequence of global in time finite energy weak solutions
related to those data, according to Theorem \ref{th:existence}.
Let $u_0\in L^2(\Omega)$ and $r_0$, $\beta_0\,\in L^2(\Omega)\cap L^\infty(\Omega)$ be identified as, respectively, in \eqref{conv:u_0} and \eqref{conv:r-beta_init}.
For any $\veps>0$ fixed, define the quantities
\[
 r_\veps\,:=\,\frac{1}{\veps}\,\big(\rho_\veps-\oline\rho\big)\,,\qquad \beta_\veps\,:=\,\frac{1}{\veps}\,\big(b_\veps-\oline b\big)\,,
\qquad \alpha_\veps\,:=\,\oline\rho\,\beta_\veps\,-\,\oline b\,r_\veps\,.
\]
For convenience of notation, let us also set $c_0\,:=\,\frac{\oline b}{\oline\rho}$ and ${m}\,:=\,\min\{2,\g\}$.

Then, up to the extraction of a suitable subsequence and for suitable target profiles belonging to the respective spaces, one has the following convergence properties
in the limit $\veps\to0$:
\begin{itemize}
 \item $r_\veps\,\stackrel{*}{\rightharpoonup}\,r$ in the space $L^\infty\big(\R_+;L^{m}(\Omega)\big)$;
 \item $\beta_\veps\,\stackrel{*}{\rightharpoonup}\,\beta$ in the space $L^\infty\big(\R_+;L^{2}(\Omega)\big)$, where $\beta = \beta(t,x_h)$ only depends
 on the horizontal variables;
 \item $\alpha_\veps\,\stackrel{*}{\rightharpoonup}\,\alpha\,:=\,\oline\rho\,\beta\,-\,\oline b\,r$ in the space
$L^\infty\big(\R_+;L^{m}(\Omega)\big)$, and the convergence is strong in the space
$L^\infty\big(\R_+;W^{-1,m}(\Omega)\big)$;
\item $u_\veps\,\stackrel{*}{\rightharpoonup}\,c_0\,\big(\nabla^\perp_h\beta,0\big)$ in the space
$L^2_\loc\big(\R_+;H^1(\Omega)\big)$.
\end{itemize}
In addition, the couple $\big(\alpha,\beta\big)$ is a weak solution to the following quasi-geostrophic type system,
\[
\left\{ \begin{array}{l}
         \d_t\alpha\,+\,c_0\,\nabla^\perp_h\beta\cdot\nabla_h\alpha\,=\,0 \\[1ex]
         \d_t\left(\frac{1}{c_0}\,\beta\,-\,\oline\rho\,c_0\,\Delta_h\beta\,-\,\frac{1}{\oline\rho\,c_0}\,\lan\alpha\ran\right)\,-\,
         \oline\rho\,c_0^2\,\nabla^\perp_h\beta\cdot\nabla_h\Delta_h\beta\,+\,c_0\,\Delta_h^2\beta\,=\,0\,,
        \end{array}
\right.
\]
related to the initial data 
\begin{align*}
 \alpha_{|t=0}\,=\,\oline\rho\,\beta_0\,-\,\oline b\,r_0\qquad \mbox{ and }\qquad
 \left(\frac{1}{c_0}\,\beta\,-\,\oline\rho\,c_0\,\Delta_h\beta\,-\,\frac{1}{\oline\rho\,c_0}\,\lan\alpha\ran\right)_{|t=0}\,=\,
 \lan r_0\ran-\,\oline\rho\,\lan\omega_0\ran\,,
\end{align*}
where we have defined $\o_0\,:=\,\curlh u_{0,h}\,=\,\d_1u_{0,2}\,-\,\d_2u_{0,1}$.
\end{thm}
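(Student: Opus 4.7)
I would follow the general three-step program sketched in the introduction for Theorem~\ref{th:limit}, augmented by two structural additions due to the Coriolis penalisation: the limit velocity is forced to be two-dimensional and horizontal by a Taylor--Proudman-type constraint, and the fast oscillations are now governed by a Poincar\'e-type wave system combining magnetic pressure and rotation.

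\textbf{Uniform bounds.} The energy inequality of Definition~\ref{def:weak}, together with the $\veps^{-2}$ prefactor in front of $\mc B(b_\veps\,|\,\oline b)$, yields $\sqrt{\rho_\veps}\,u_\veps\Subset L^\infty_tL^2_x$, $\nabla u_\veps\Subset L^2_tL^2_x$ and $\beta_\veps\Subset L^\infty_tL^2_x$; an essential/residual decomposition together with $\g>3/2$ gives $r_\veps\Subset L^\infty_tL^m_x$. The key supplementary bound, central to the strategy advertised in Remark~\ref{r:beyond}, comes from the transport equation for $Z_\veps=b_\veps/\rho_\veps$ provided by Theorem~\ref{th:existence}: since transport preserves $L^\infty$ norms and the initial estimate~\eqref{est:b-r_in} gives $\|Z_{0,\veps}-\oline b/\oline\rho\|_{L^\infty}\leq C_0\,\veps$, we propagate $|\oline\rho\,b_\veps-\oline b\,\rho_\veps|\leq C\,\veps\,\rho_\veps$ pointwise, whence $\alpha_\veps\Subset L^\infty_{t,x}$. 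Weak-$*$ compactness selects the candidates $r$, $\beta$, $\alpha$.

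\textbf{Constraints in the limit and equation for $\alpha$.} Multiplying the momentum balance by $\veps$ and sending $\veps\to 0$ produces the geostrophic identity $\oline b\,\nabla\beta+\oline\rho\,e_3\times u=0$, which combined with $\div u=0$ (coming from the continuity equation for $b_\veps$ at order $\veps^{-1}$) and the boundary condition~\eqref{bc:compl-slip} forces $u_3\equiv 0$, $\beta=\beta(t,x_h)$ and $u_h=c_0\,\nabla^\perp_h\beta$. The linear combination $\oline\rho\times(\d_tb_\veps+\div(b_\veps u_\veps))-\oline b\times(\d_t\rho_\veps+\div(\rho_\veps u_\veps))=0$ cancels the singular contributions and yields the regular conservation law $\d_t\alpha_\veps+\div(\alpha_\veps u_\veps)=0$. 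An Aubin--Lions argument, based on the uniform bound for $\alpha_\veps$ and the time-equicontinuity provided by this transport equation, upgrades the weak-$*$ convergence to strong convergence $\alpha_\veps\to\alpha$ in $L^\infty_tW^{-1,m}_x$. Passing to the limit and using $\div u=0$ to rewrite $\div(\alpha u)=c_0\,\nabla^\perp_h\beta\cdot\nabla_h\alpha$ produces the first equation of the target system.

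\textbf{Equation for $\beta$ and the main obstacle.} To obtain the second equation I would apply to the momentum balance a projector that annihilates the pressure and magnetic pressure gradients: concretely, the horizontal curl $\curlh$ of the horizontal components followed by the vertical mean $\lan\cdot\ran$, which is natural given the two-dimensional nature of the limit. The Coriolis source $(1/\veps)\,\curlh(e_3\times\rho_\veps u_\veps)_h=-(1/\veps)\,\divh(\rho_\veps u_\veps)_h$ is converted, through the continuity equation, into time-derivative contributions that in the limit generate the linear combination $\d_t\!\left(\tfrac{1}{c_0}\beta-\oline\rho\,c_0\,\Delta_h\beta-\tfrac{1}{\oline\rho\,c_0}\lan\alpha\ran\right)$, while $\div\mbb S(Du)$ evaluated on $u=c_0(\nabla^\perp_h\beta,0)$ contributes the dissipative term $c_0\,\Delta_h^2\beta$. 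The hardest point, as emphasised in Remark~\ref{r:beyond} and Subsection~\ref{ss:over}, is the limit of the convective term $\rho_\veps u_\veps\otimes u_\veps$: after expanding $\rho_\veps=\oline\rho+\veps\,r_\veps$ and $u_\veps\sim c_0(\nabla^\perp_h\beta_\veps,0)$, there appears a nonlinear contribution of the form $r_\veps\,\nabla\beta_\veps$ for which weak-weak convergence fails. The resolution exploits the algebraic identity $\oline b\,r_\veps=\oline\rho\,\beta_\veps-\alpha_\veps$ to split
\[
r_\veps\,\nabla\beta_\veps\,=\,\frac{\oline\rho}{2\,\oline b}\,\nabla\!\left(\beta_\veps^{\,2}\right)\,-\,\frac{1}{\oline b}\,\alpha_\veps\,\nabla\beta_\veps\,;
\]
the first piece is a pure gradient, killed by the projector $\curlh$, while the second passes to the limit as $\alpha\,\nabla\beta$ thanks to the strong convergence of $\alpha_\veps$ in $L^\infty_tW^{-1,m}_x$ matched with the compactness of $\nabla_h\beta_\veps$ inherited from $u_\veps$ via the geostrophic relation $u_h=c_0\,\nabla^\perp_h\beta_\veps$. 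This is precisely the mechanism producing the nonlinear term $\oline\rho\,c_0^2\,\nabla^\perp_h\beta\cdot\nabla_h\Delta_h\beta$. The initial conditions are finally identified by testing the weak formulations of the conservation laws at $t=0$ against smooth functions and invoking~\eqref{conv:r-beta_init}--\eqref{conv:u_0}.
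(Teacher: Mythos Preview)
Your first two steps are essentially correct and match the paper (modulo the overstatement $\alpha_\veps\Subset L^\infty_{t,x}$, which should be $L^\infty_t L^m_x$ since $\rho_\veps$ is not uniformly bounded). The genuine gap is in your treatment of the convective term.

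The expansion ``$u_\veps \sim c_0(\nabla^\perp_h\beta_\veps,0)$'' is not valid at finite $\veps$: the geostrophic relation $u_h = c_0\nabla^\perp_h\beta$ holds only for the \emph{limit}, so there is no compactness of $\nabla_h\beta_\veps$ ``inherited from $u_\veps$'' to invoke. The product $u_\veps\otimes u_\veps$ (or $V_\veps\otimes V_\veps$) is genuinely weak--weak and cannot be handled by a direct expansion. The paper instead regularises, then splits $V_{\veps,M}$ into its vertical average $\lan V_{\veps,M}\ran$ and oscillating part $\wtilde V_{\veps,M}$, treating each via compensated compactness against the acoustic--Poincar\'e wave system. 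For the vertical averages the manipulation produces, besides the term $\lan\alpha_\veps\ran\nabla_h\lan\beta_\veps\ran$ that you identify, a second nonlinear term $\big(\lan\omega_\veps\ran-\lan r_\veps\ran\big)\lan V_{\veps,h}^\perp\ran$, where $\omega_\veps=\curlh V_{\veps,h}$; passing to the limit here requires a \emph{separate} compactness result for $\omega_\veps-r_\veps$ (obtained by taking $\curlh$ of the horizontal momentum equation and subtracting the mass equation), and it is this term, not $\alpha_\veps\nabla\beta_\veps$, that generates the nonlinearity $\Delta_h\beta\,\nabla_h\beta$. For the oscillating part $\wtilde V_{\veps,h}\otimes\wtilde V_{\veps,h}$ one needs a further structural cancellation (of Gallagher--Saint-Raymond type) using the quantity $\wtilde\Theta_{\veps,h}=\wtilde V_{\veps,h}^\perp-\d_3^{-1}\nabla_h^\perp\wtilde V_{\veps,3}$ to show that this contribution is a remainder. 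Both of these ingredients are absent from your sketch, and without them the argument does not close.
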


\begin{rem} \label{r:closed-syst}
The second equation appearing in the limit system can be also rewritten as
\[
\d_t\big(\lan r\ran\,-\,\oline\rho\,c_0\,\Delta_h\beta\big)\,-\,\oline\rho\,c_0^2\,\nabla^\perp_h\beta\cdot\nabla_h\Delta_h\beta\,+\,c_0\,\Delta_h^2\beta\,=\,0\,,
\]
which may be more familiar to the reader expert in singular limits for fast rotating fluids (see \tsl{e.g.} \cite{F-G-N, DS-F-S-WK, F_PhysD}).
Here, we have exploited the definition of $\alpha$ in order to write a closed system for the couple of unknowns
$\big(\alpha,\beta\big)$. Indeed the system is closed, as, by taking the vertical averages of the first equation and using that $\beta$ does not depend
on $x_3$, one gets an equation for $\lan\alpha\ran$:
\[
\d_t\lan\alpha\ran\,+\,c_0\,\nabla^\perp_h\beta\cdot\nabla_h\lan\alpha\ran\,=\,0\,,
\]
with initial datum $\lan\alpha\ran_{|t=0}\,=\,\oline\rho\,\lan\beta_0\ran\,-\,\oline b\,\lan r_0\ran$.
\end{rem}

Finally, we point out that a similar observation as Remark \ref{r:beyond} applies also to the fast rotation limit. Here, the needed information
on the limit profiles for the $O(\veps)$ perturbations of the static states $\oline b$ and $\oline\rho$ is in fact contained in the identification
of the quantity $\alpha_\veps$ and its limit $\alpha$.

\medbreak
The rest of this work is devoted to the proof of Theorems \ref{th:limit} and \ref{th:lim_Coriols}.

\section{Uniform bounds and weak convergence} \label{s:uniform}

We collect here the uniform bounds for the family of weak solutions $\big(\rho_\veps,u_\veps,b_\veps\big)_\veps$. In the first part of the section,
we focus on esimtates which come from the energy inequality.
In the second part of the section, we get important $L^\infty$ estimates from the transport equation satisfied by the quantity $Z_\veps\,:=\,b_\veps/\rho_\veps$.
By the use of those uniform bounds,  in Subsection \ref{ss:weak_conv} we deduce first (weak) convergence properties
for the sequence $\big(\rho_\veps,u_\veps,b_\veps\big)_\veps$. Finally, in Subsection \ref{ss:hidden} we prove a key compactness result on a special
quantity linking the $b_\veps$'s and the $\rho_\veps$'s.

\subsection{Estimates coming from the energy inequality} \label{ss:bounds_energy}

Given a family of global in time finite energy weak solutions  $\big(\rho_\veps,u_\veps,b_\veps\big)_{\veps}$, by Definition \ref{def:weak} we know that,
for any $\veps\in\,]0,1]$ fixed and for almost any $T>0$, one has
\begin{equation} \label{ub:energy}
\mc E\big(\rho_\veps,u_\veps,b_\veps\big)(T)\,+\,\int^T_0\int_\Omega\mbb S(\D u_\veps):\D u_\veps\,\dx\,\dd t\,\leq\,\mc E\big(\rho_{0,\veps},u_{0,\veps},b_{0,\veps}\big)\,.
\end{equation}
Owing to the assumptions formulated on the family of initial data
$\big(\rho_{0,\veps},u_{0,\veps},b_{0,\veps}\big)_{\veps}$  (see Subsection \ref{ss:data}), we deduce that there exists a constant $K>0$ such that
\[
 \sup_{\veps\in\,]0,1]}\mc E\big(\rho_{0,\veps},u_{0,\veps},b_{0,\veps}\big)\,\leq\,K\,.
\]
Therefore, from the energy inequality \eqref{ub:energy} we can extract suitable uniform bounds for the family of weak solutions.
Recall that we write $\big(f_\veps\big)_\veps\Subset X$ to denote that the sequence $\big(f_\veps\big)_\veps\subset X$ is \emph{bounded} in the Banach space $X$.

\medbreak
First of all, we consider the velocity fields. We have
\begin{equation} \label{est:u_en}
\left(\sqrt{\rho_\veps}\,u_\veps\right)_{\veps}\,\Subset\,L^\infty\big(\R_+;L^2(\Omega)\big)\qquad\qquad \mbox{ and }\qquad\qquad
\left(\nabla u\right)_\veps\,\Subset\,L^2\big(\R_+;L^2(\Omega)\big)\,.
\end{equation}

Next, we consider the magnetic functions $b_\veps$. From the last term in the definition of the energy functional $\mc E\big(\rho_\veps,u_\veps,b_\veps\big)$, keep in mind
relations \eqref{def:E} and \eqref{def:B}, it is easy to see that
\begin{equation} \label{eq:b_decomp}
b_\veps\,=\,\oline b\,+\,\veps\,\beta_\veps\,,\qquad\qquad \mbox{ with }\qquad \left(\beta_\veps\right)_\veps\,\Subset\,L^\infty\big(\R_+;L^2(\Omega)\big)\,.
\end{equation}

Now we consider the density functions. Following a nowadays classical approach (see \tsl{e.g.} the book \cite{F-N}),
we introduce the notion of essential and residual sets, $\Omega^\veps_\ess$ and $\Omega^\veps_\res$ respectively:
\[
 \Omega^\veps_\ess(t)\,:=\,\left\{x\in\Omega\;\Big|\quad \frac{\oline\rho}{2}\,\leq\,\rho(t,x)\,\leq\,2\,\oline\rho\,\right\}\,,\qquad\qquad
 \Omega^\veps_\res(t)\,:=\,\Omega\setminus\Omega^\veps_\ess(t)\,.
\]
Let us denote by $\mds1_A$ the characteristic function of a set $A\subset\Omega$, namely $\mds1_A(x)=1$ if $x\in A$ and $\mds1_A(x)=0$ if $x\in\Omega\setminus A$.
Then, for any function $f$ defined on $\R_+\times\Omega$, we set
\[
[f]_\ess\,:=\,f\,\mds1_{\Omega^\veps_\ess}\qquad\qquad \mbox{ and }\qquad\qquad
[f]_\res\,:=\,f\,\mds1_{\Omega^\veps_\res}\,=\,f\,\left(1-\mds1_{\Omega^\veps_\ess}\right)\,.
\]
By using classical computations (see \tsl{e.g.} \cite{F_PhysD}), from the density term in the energy functional and the uniform bound in \eqref{ub:energy}
one can deduce that
\begin{equation} \label{eq:dens_decomp}
\rho_\veps\,=\,\oline\rho\,+\,\s_\veps\,, 
\end{equation}
where one has
\begin{equation} \label{ub:s_eps}
\left(\big[\s_\veps\big]_\ess\right)_\veps\,\Subset\,L^\infty\big(\R_+;L^2(\Omega)\big)\qquad\qquad \mbox{ and }\qquad\qquad
\left(\big[\s_\veps\big]_\res\right)_\veps\,\Subset\,L^\infty\big(\R_+;L^\g(\Omega)\big)\,.
\end{equation}

Next, in a standard way, see \tsl{e.g.} \cite{F-G-N} and \cite{F_PhysD}, combining \eqref{est:u_en} with \eqref{eq:dens_decomp} and \eqref{ub:s_eps} and using (in the
three-dimensional setting) that $\g>3/2$, one can get
\begin{equation} \label{ub:u_H^1}
\left(u_\veps\right)_\veps\,\Subset\,L^2_{\rm loc}\big(\R_+;H^1(\Omega)\big)\,.
\end{equation}

Observe that, in our context, we cannot use the classical argument in two-fluid systems to improve the integrability of $\rho_\veps$ thanks to $b_\veps$,
thus getting rid of the assumption $\g>3/2$. See Remark \ref{r:two-fluid} below for more comments about this point.

\subsection{Estimates coming from the transport equation} \label{ss:bounds_transport}

Decomposition \eqref{eq:dens_decomp} is a main obstacle when computing the asymptotics of system \eqref{eq:reduced-MHD} for $\veps\ra0^+$,
as it does not entail any smallness for the density perturbation functions $\s_\veps$. On the contrary, as $b_\veps-\oline b=O(\veps)$, the
third equation of \eqref{eq:reduced-MHD} imposes that (roughly speaking) $\div u_\veps$ has to be small; combining this with the mass conservation equation
in \eqref{eq:reduced-MHD}, we thus expect to gain some smallness also for the functions $\s_\veps$.

Justifying those heuristics and finding quantitative smallness bounds for the family $\big(\s_\veps\big)_\veps$ is the main goal of this subsection.
For obtaining these properties, the crucial point is to exploit the transport equation satisfied (according to the last statement in Theorem \ref{th:existence}),
by the quotient $b_\veps/\rho_\veps$.
More precisely, let us introduce the functions
\[
\mc Z_{0,\veps}\,:=\,\frac{1}{\veps}\left(\frac{b_{0,\veps}}{\rho_{0,\veps}}\,-\,\frac{\oline b}{\oline\rho}\right)\qquad\qquad \mbox{ and }\qquad\qquad
\mc Z_{\veps}\,:=\,
\left\{\begin{array}{ll}
        \dfrac{1}{\veps}\left(\dfrac{b_{\veps}}{\rho_{\veps}}\,-\,\dfrac{\oline b}{\oline\rho}\right) & \quad \mbox{ if }\; \rho_\veps>0\,, \\[2ex]
        1 & \quad \mbox{ if }\; \rho_\veps=0\,.
       \end{array}
\right.
\]
Then, for any $\veps>0$ fixed, $\mc Z_\veps$ satisfies the transport problem
\[
\d_t\mc Z_\veps\,+\,u_\veps\cdot\nabla\mc Z_\veps\,=\,0\,,\qquad\qquad \big(\mc Z_\veps\big)_{|t=0}\,=\,\mc Z_{0,\veps}\,.
\]

As, by assumption \eqref{est:b-r_in}, we have that $\big(\mc Z_{0,\veps}\big)_{\veps}\Subset L^\infty(\Omega)$,
by transport theory we deduce that such $L^\infty$ bound is preserved, uniformly with respect to $\veps\in\,]0,1]$:
\begin{equation} \label{est:b-r}
\forall\,\veps\in\,]0,1]\,,\qquad\qquad \frac{1}{\veps}\left|\frac{b_{\veps}}{\rho_{\veps}}\,-\,\frac{\oline b}{\oline\rho}\right|\,\leq\,C_0\qquad \mbox{ a.e. in }\ \R_+\times\Omega\,.
\end{equation}
In other words, one has
\begin{equation*} 
 \big(\mc Z_\veps\big)_{\veps}\,\Subset\,L^\infty\big(\R_+\times\Omega\big)\,.
\end{equation*}

Using \eqref{eq:b_decomp} and \eqref{eq:dens_decomp}, from the previous relation simple computations yield
\[
\left|\oline\rho\,\beta_\veps\,-\,\oline b\,\frac{\s_\veps}{\veps}\right|\,\leq\,C_0\,\oline\rho\,\rho_\veps\,,
\]
which implies, together with \eqref{ub:s_eps} and the fact that $\Omega=\T^3$ is bounded, that
\[
\left(\left[\oline\rho\,\beta_\veps\,-\,\oline b\,\frac{\s_\veps}{\veps}\right]_\ess\right)_\veps\,\Subset\,L^\infty\big(\R_+;L^{2}(\Omega)\big)\qquad \mbox{ and }\qquad 
\left(\left[\oline\rho\,\beta_\veps\,-\,\oline b\,\frac{\s_\veps}{\veps}\right]_\res\right)_\veps\,\Subset\,L^\infty\big(\R_+;L^{\g}(\Omega)\big)\,.
\]
In turn, after setting
\[
 r_\veps\,:=\,\frac{\s_\veps}{\veps}\,,
\]
from the previous uniform bounds and \eqref{eq:b_decomp}, we discover that we can indeed reinforce \eqref{eq:dens_decomp} to the decomposition
\begin{equation} \label{eq:dens_dec-2}
 \rho_\veps\,=\,\oline\rho\,+\,\veps\,r_\veps\,,
\end{equation}
where $\big(r_\veps\big)_\veps$ satifies the properties
\begin{equation} \label{ub:r_eps}
 \left(\left[r_\veps\right]_\ess\right)_\veps\,\Subset\,L^\infty\big(\R_+;L^2(\Omega)\big)\qquad \mbox{ and }\qquad
 \left(\left[r_\veps\right]_\res\right)_\veps\,\Subset\,L^\infty\big(\R_+;L^{\min\{2,\g\}}(\Omega)\big)\,.
\end{equation}

\begin{rem} \label{r:two-fluid}
Differently from what is usually assumed in mathematical studies on two-fluid flows (keep in mind Remark \ref{r:key-prop} above),
here we do not know that $\frac{\oline b}{\oline\rho}-C_0>0$, so we cannot upgrade the integrability properties of $\beta_\veps$ and $r_\veps$ to
the value $\max\{2,\g\}$, and we must stick to the above values of the integrability indices.
\end{rem}

\subsection{First convergence properties} \label{ss:weak_conv}

We can use the uniform boundedness properties established in Subsections \ref{ss:bounds_energy} and \ref{ss:bounds_transport} to deduce important
weak convergence (up to a suitable extraction, which will be omitted for simplicity of presentation) of the family of solutions
$\big(\rho_\veps,u_\veps,b_\veps\big)$. In this subsection, it is convenient to resort to the notation, introduced
in Theorem \ref{th:lim_Coriols},
\[
 m\,:=\,\min\{2,\g\}\,.
\]

First of all, owing to the decompositions \eqref{eq:b_decomp} and \eqref{eq:dens_dec-2}, we get, in the limit $\veps\to0$, the strong convergence properties
\begin{align}
\label{conv:dens-b}
&b_\veps\,\longrightarrow\,\oline b\qquad\qquad \mbox{ in }\qquad L^\infty\big(\R_+;L^2(\Omega)\big) \\
\nonumber
\mbox{ and }\quad &\rho_\veps\,\longrightarrow\,\oline\rho\qquad\qquad \mbox{ in }\qquad L^\infty\big(\R_+;L^{m}(\Omega)\big)\,,
\end{align}
with actually an explicit rate $O(\veps)$.

At the same time, using \eqref{eq:b_decomp} again together with \eqref{ub:r_eps}, we deduce the existence of functions
$\beta\in L^\infty\big(\R_+;L^2(\Omega)\big)$ and $r\in L^\infty\big(\R_+;L^{m}(\Omega)\big)$ such that
\begin{equation} \label{conv:b-r_weak}
  \beta_\veps\,\stackrel{*}{\rightharpoonup}\,\beta\quad \mbox{ in }\; L^\infty\big(\R_+;L^2(\Omega)\big)\qquad \mbox{ and }\qquad
 r_\veps\,\stackrel{*}{\rightharpoonup}\,r\quad \mbox{ in }\; L^\infty\big(\R_+;L^{m}(\Omega)\big)\,.
\end{equation}
By the same token, using \eqref{ub:u_H^1}, we find the existence of a profile $u\in L^2_\loc\big(\R_+;H^1(\Omega)\big)$ such that
\begin{equation} \label{conv:u_weak-H}
\forall\,T>0\,,\qquad\qquad u_\veps\,\rightharpoonup\,u\qquad \mbox{ in }\quad L^2\big([0,T];H^1(\Omega)\big)\,.
\end{equation}

\begin{rem} \label{r:conv_u}
Owing to the uniform bounds \eqref{est:u_en} and the decomposition \eqref{eq:dens_dec-2} for the density functions, by uniqueness of the weak limit
it is easy to see that, in fact, the target velocity field $u$ actually satisfies
$u\in L^\infty\big(\R_+;L^2(\Omega)\big)$, with $\nabla u\in L^2\big(\R_+;L^2(\Omega)\big)$ and that one has the weak convergences
$\sqrt{\rho_\veps}\,u_\veps\,\stackrel{*}{\rightharpoonup}\,\sqrt{\oline\rho}\,u$ in $L^\infty\big(\R_+;L^2(\Omega)\big)$ and
$\nabla u_\veps\,\rightharpoonup\,\nabla u$ in $L^2\big(\R_+;L^2(\Omega)\big)$.
\end{rem}

Thanks to the previous convergence results, we can already derive some static properties for the target profiles.
First of all, let us consider the weak formulation of the equation for $b_\veps$:
for any $\vphi\in C^\infty_0\big([0,T[\,\times\Omega\big)$, for some fixed time $T>0$, one has
\begin{align*}
 -\int_0^T\int_{\Omega}b_\veps\,\d_t\vphi\,\dx\,\dd t\,-\,\int^T_0\int_{\Omega}b_\veps\,u_\veps\cdot\nabla\vphi\,\dx\,\dd t\,=\,\int_\Omega b_{0,\veps}\,\vphi(0,\cdot)\,\dx\,.
\end{align*}
Using \eqref{conv:dens-b} and \eqref{conv:u_weak-H}, it is easy to pass to the limit in the above relation and get that
\[
 \oline b\,\int^T_0\int_{\Omega}\,u\cdot\nabla\vphi\,\dx\,\dd t\,=\,0
\]
for any test-function $\vphi$ as above. Therefore, we deduce that
\begin{equation} \label{constr:div}
 \div u\,=\,0\qquad\qquad \mbox{ a.e. in }\ \R_+\times\Omega\,.
\end{equation}

To proceed further, we must analyse the weak formulation of the momentum equation: given a test-function $\psi\in C^\infty_0\big([0,T[\,\times\Omega;\R^3\big)$,
for some time $T>0$, 
we have the equation
\begin{align}
\label{eq:u_weak}
&-\int^T_0\int_{\Omega}\rho_\veps\,u_\veps\cdot\d_t\psi\,\dx\,\dd t\,-\,\int^T_0\int_{\Omega}\rho_\veps\,u_\veps\otimes u_\veps:\nabla\psi\,\dx\,\dd t \\
\nonumber
&\qquad -\int^T_0\int_{\Omega}\left(P(\rho_\veps)\,+\,\frac{1}{2\,\veps^2}\,b_\veps^2\right)\,\div\psi\,\dx\,\dd t\,+\,
\frac{\k}{\veps}\int^T_0\int_\Omega e_3\times\rho_\veps\,u_\veps\cdot\psi\,\dx\,\dd t \\
\nonumber
&\qquad\qquad +\int^T_0\int_{\Omega}\mbb S(\D u_\veps):\D\psi\,\dx\,\dd t\,=\,\int_\Omega\rho_{0,\veps}\,u_{0,\veps}\cdot\psi(0,\cdot)\,\dx\,.
\end{align}

Consider now equation \eqref{eq:u_weak} where we use $\veps\psi$ instead of $\psi$ as a test-function.
Observe that $P(\rho_\veps)$ belongs $L^\infty\big([0,T];L^1(\Omega)\big)$ thanks to the bounds established in Subsection \ref{ss:bounds_energy}.
Using similarly the other uniform bounds, we can easily check that all the terms converge to $0$, except for the terms
depending on $b_\veps^2$ and on the Coriolis term $e_3\times\rho_\veps\,u_\veps$. Therefore, we obtain that
\begin{equation} \label{eq:constr-limit}
\lim_{\veps\to0}\left( -\int^T_0\int_{\Omega}\frac{1}{2\,\veps}\,b_\veps^2\,\div\psi\,\dx\,\dd t\,+\,\k\int^T_0\int_\Omega e_3\times\rho_\veps\,u_\veps\cdot\psi\,\dx\,\dd t
\right)\,=\,0\,.
\end{equation}

Let us carefully study the two terms appearing in \eqref{eq:constr-limit}. 
Firstly, we focus on the first integral. Observe that, resorting to the definition \eqref{def:B} and the decomposition \eqref{eq:b_decomp}, we can write
\begin{align} \label{eq:constr-equal}
\frac{1}{2\,\veps}\,b_\veps^2\,=\,\frac{\veps}{\veps^2}\,\mc B\big(b_\veps\,\big|\,\oline b\big)\,+\,
\frac{1}{2\,\veps}\,\oline b^2\,+\,\oline b\,\beta_\veps\,,
\end{align}
where, thanks to the energy balance \eqref{ub:energy}, one has
\begin{equation} \label{ub:BB}
\Big( \frac{1}{\veps^2}\,\mc B\big(b_\veps\,\big|\,\oline b\big)\Big)_\veps\,\Subset\,L^\infty\big(\R_+;L^1(\Omega)\big)
\end{equation}
(see also Lemma 4.1 of \cite{F_PhysD} for more details).
Inserting \eqref{eq:constr-equal} into \eqref{eq:constr-limit} and using the above unform bound,
since $\psi$ is compactly supported in $\Omega$ we immediately get that
\begin{align*}
 \lim_{\veps\to0}\left(-\int^T_0\int_{\Omega}\frac{1}{2\,\veps}\,b_\veps^2\,\div\psi\,\dx\,\dd t\right)\,&=\,
 \lim_{\veps\to0}\left(-\,\oline b\int^T_0\int_{\Omega}\beta_\veps\,\div\psi\,\dx\,\dd t\right) \\
 &=\,-\,\oline b\int^T_0\int_{\Omega}\beta\,\div\psi\,\dx\,\dd t\,,
\end{align*}
where the last equality follows from \eqref{conv:b-r_weak}.

Similarly, from \eqref{eq:dens_dec-2} and \eqref{conv:u_weak-H} we infer that
\begin{align*}
\lim_{\veps\to0}\int^T_0\int_\Omega e_3\times\rho_\veps\,u_\veps\cdot\psi\,\dx\,\dd t\,=\,\oline\rho\int^T_0\int_\Omega e_3\times u\cdot\psi\,\dx\,\dd t\,.
\end{align*}

Putting these relations together, we thus deduce that, in the limit $\veps\to0$, the following relation must hold true in the sense
of distributions:
\begin{equation} \label{constr:beta-u}
 \oline b\,\nabla\beta\,+\,\k\,\oline\rho\,e_3\times u\,=\,0\,.
\end{equation}
We now split our discussions in the two cases $\k=0$ and $\k=1$.

\paragraph{The case $\k=0$.}
When $\k=0$, relation \eqref{constr:beta-u} simply yields a trivial constraint on the function $\beta$, which must be independent of the space variable:
\begin{equation} \label{eq:beta-triv}
 \mbox{ for }\ \k=0\,,\qquad\qquad \beta\,=\,\beta(t)\,.
\end{equation}

\paragraph{The case $\k=1$.}
In the case $\k=1$, instead, relation \eqref{constr:beta-u} implies a non-trivial relation between $\beta$ and $u$.
Here below, we resort to the notation introduced before the statement of Theorem \ref{th:lim_Coriols}:
for any vector $V = (V_1,V_2,V_3)\in\R^3$, we denote by $V_h = (V_1,V_2)$, and we define $\nabla_h$ and $\divh$ to be the usual
differential operators acting only on the variables $x_h=(x_1,x_2)$.

With this notation at hand, we start by noticing that \eqref{constr:beta-u} implies that
\[
 \beta = \beta(t,x_h)
\]
must be independent of the vertical variable. Thus, $u_h$ also satisfies $u_h = u_h(t,x_h)$. In addition, reading \eqref{constr:beta-u} on the first two components only,
one gets that
\[
 u_h\,=\,\frac{\oline b}{\oline\rho}\,\nabla^\perp_h\beta\,,
\]
where we have set $\nabla^\perp_h = (-\d_2,\d_1)$. Therefore, we infer that $\divh u_h = 0$, which implies, together with \eqref{constr:div}, that
$\d_3u_3=0$, whence $u_3 = u_3(t,x_h)$. Using the complete-slip boundary conditions \eqref{bc:compl-slip}, this finally implies
that $u_3\equiv0$.
All in all, we have thus proved that
\begin{align}
\label{constr:TP_k=1}
 \mbox{ for }\ \k=1\,,\qquad\qquad \beta\,=\,\beta(t,x_h)\qquad \mbox{ and }\qquad u\,=\,\big(u_h,0\big)\,, \ \mbox{ with } \ 
 u_h\,=\,\frac{\oline b}{\oline\rho} \,\nabla_h^\perp\beta\,.
\end{align}
We remark that this can be interpreted as a sort of mathematical description of the celebrated Taylor-Proudman theorem in geophysics
(see \tsl{e.g.} \cite{C-R, Ped}).

\subsection{A hidden compactness result} \label{ss:hidden}

In the proof of both Theorem \ref{th:limit} and Theorem \ref{th:lim_Coriols}, we need an extra information about a suitable combination
of $\beta$ and $r$. In fact, more than that, we need to understand the relation between the scalar functions $\beta_\veps$ and $r_\veps$
in the limit process $\veps\to0$. This ingredient, absolutely crucial for obtaining our results, is \emph{new} with respect to both the classical
incompressible limit and fast rotation limit, and their combination.

The extra information, common to both cases $\k=0$ and $\k=1$, will be obtained by a sort of \emph{compensated compactness} argument. By anticipating
what we will do in Sections \ref{s:conv-0} and \ref{s:conv-1} below, let us introduce the following definitions:
\[
 V_\veps\,:=\,\rho_\veps\,u_\veps\qquad\qquad \mbox{ and }\qquad\qquad W_\veps\,:=\,b_\veps\,u_\veps\,.
\]
Resorting to relations \eqref{eq:b_decomp} and \eqref{eq:dens_dec-2}, we see that we can reformulate the first and last equations
in system \eqref{eq:reduced-MHD} in the following form:
\begin{equation} \label{eq:b-r_waves}
\left\{ \begin{array}{l}
         \veps\,\d_tr_\veps\,+\,\div V_\veps\,=\,0 \\[1ex]
         \veps\,\d_t\beta_\veps\,+\,\div W_\veps\,=\,0\,.
        \end{array}
\right.
\end{equation}
We omit to reformulate the momentum equation, as (differently from what we will consider here) this depends on the choice of $\k\in\{0,1\}$.

Next, define the quantity
\[
 \alpha_\veps\,:=\,\oline\rho\,\beta_\veps\,-\,\oline b\,r_\veps\,.
\]
Observe that, owing to \eqref{eq:b_decomp} and \eqref{ub:r_eps}, we have
\begin{equation} \label{ub:alpha_e}
\big(\alpha_\veps\big)_\veps\,\Subset\,L^\infty\big(\R_+;L^m(\Omega)\big)\,,\qquad\qquad \mbox{ with }\quad m\,:=\,\min\{2,\g\}\,.
\end{equation}
At the same time, from the definitions of $V_\veps$ and $W_\veps$ and relations \eqref{eq:b_decomp} and \eqref{eq:dens_dec-2} again,
straightforward computations yield
\begin{align*}
 \oline\rho\,W_\veps\,-\,\oline b\,V_\veps\,&=\,\oline\rho\,\oline b\,u_\veps\,+\,\veps\,\oline\rho\,\beta_\veps\,u_\veps\,-\,\oline\rho\,\oline b\,u_\veps\,-\,
 \veps\,\oline b\,r_\veps\,u_\veps \\
 &=\, \veps\,\Big(\oline\rho\,\beta_\veps\,-\,\oline b\,r_\veps\Big)\,u_\veps \\
&=\,\veps\,\alpha_\veps\,u_\veps
\end{align*}
Therefore, using the previous relation, from system \eqref{eq:b-r_waves} we see that
\begin{equation} \label{eq:alpha}
 \d_t\alpha_\veps\,=\,\div\Big(\alpha_\veps\, u_\veps\Big)\,.
\end{equation}

At this point, taking advantage of \eqref{ub:alpha_e} and \eqref{ub:u_H^1}, along with the condition that $\g>3/2$, equation \eqref{eq:alpha} yields that
\[
\big(\d_t\alpha_\veps\big)_\veps\,\Subset\,L^2_\loc\big(\R_+;W^{-1,p}(\Omega)\big)\,,\qquad\qquad \mbox{ with }\quad
\frac{1}{p}\,:=\,\frac{1}{m}\,+\,\frac{1}{6}\,<\,\frac{5}{6}\,.
\]
By the Aubin-Lions lemma, thanks to the embedding $L^m(\Omega)\hookrightarrow L^p(\Omega)$,
the previous bound implies that the sequence of $\alpha_\veps$'s is compact in \tsl{e.g.} the space
$L^2_\loc\big(\R_+;W^{-1,p}(\Omega)\big)$, thus, up to a suitable extraction,
(by uniqueness of the weak limit) one gets the strong convergence
\begin{equation} \label{conv:alpha_strong}
 \alpha_\veps\,\longrightarrow\,\alpha\,:=\,\oline\rho\,\beta\,-\,\oline b\,r \qquad\qquad \mbox{ in }\qquad L^2_\loc\big(\R_+;W^{-1,p}(\Omega)\big)
\end{equation}
when $\veps\to0$, where $r$ and $\beta$ are precisely the profiles identified in \eqref{conv:b-r_weak}.

As a matter of fact, going back to \eqref{ub:alpha_e} and using an interpolation argument, we infer that the strong convergence holds true
in any intermediate space between $L^2_\loc\big(\R_+;L^{m}(\Omega)\big)$ and the space $L^2_\loc\big(\R_+;W^{-1,p}(\Omega)\big)$,
thus in the space $L^2_\loc\big(\R_+;L^p(\Omega)\big)$ (as $p<m$).
Now, taking advantage of the fact that $\big(u_\veps\big)_\veps\,\Subset\,L^2_\loc\big(\R_+;H^1(\Omega)\big)$ and is weakly convergent in that space,
it is possible to give sense to the product $\alpha_\veps\,u_\veps$ 
(recall that $p>6/5$ with strict inequality) and to apply a weak-convergence/strong-convergence argument to pass to the limit in it.
Omitting the details, which are somehow standard, one can in turn pass to the limit in the wek formulation
of equation \eqref{eq:alpha} and prove that the target $\alpha\,:=\,\oline\rho\,\beta\,-\,\oline b\,r$ solves, in the weak sense, the transport equation
\begin{equation} \label{eq:alpha-lim}
\d_t\alpha\,+\,u\cdot\nabla \alpha\,=\,0\,,\qquad\qquad \alpha_{|t=0}\,=\,\alpha_0\,:=\,\oline\rho\,\beta_0\,-\,\oline b\,r_0\,,
\end{equation}
where $\beta_0$ and $r_0$ have been identified in \eqref{conv:r-beta_init} and we have also used the divergence-free condition \eqref{constr:div}
satisfied by $u$.

As a last comment, we point out that the previous analysis works also in the case $\k=1$, in which we take $\Omega = \T^2\times\,]0,1[\,$, as in this case the problem
can be made periodic in the vertical variable (see the discussion in the Introduction and Subsection \ref{ss:k=1_convect} for more details).

\section{Proof of convergence: the case $\k=0$} \label{s:conv-0}
In this section, we complete the proof of Theorem \ref{th:limit}. Thus, throughout this part we fix the choice $\k=0$ in the momentum equation in \eqref{eq:reduced-MHD}.

We have already computed the limit of the equation for the magnetic function $b_\veps$ in Subsection \ref{ss:weak_conv}, keep in mind relation \eqref{constr:div}.
It goes without saying that an analogous argument applies also to the mass equation, bringing no additional difficulties, nor (unfortunately)
any additional information on the target dynamics.
Thus, in order to complete the proof of Theorem \ref{th:limit}, we only need to pass to the limit $\veps\to0$ in the momentum equation. For this, we use
its weak formulation \eqref{eq:u_weak}, but computed on a test-function $\psi$ such that
\begin{equation} \label{eq:k=0_test}
 \psi\in C^\infty_0\big(\R_+\times\Omega;\R^3\big)\,,\qquad\qquad \mbox{ with }\qquad \div\psi\,=\,0\,.
\end{equation}
Calling $T>0$ the time such that $\Supp\psi\subset[0,T]\times\Omega$ and using the divergence-free condition over $\psi$, relation
\eqref{eq:u_weak} with $\k=0$ simply becomes
\begin{align}
\label{eq:weak_incompr}
&-\int^T_0\int_{\Omega}\rho_\veps\,u_\veps\cdot\d_t\psi\,\dx\,\dd t\,-\,\int^T_0\int_{\Omega}\rho_\veps\,u_\veps\otimes u_\veps:\nabla\psi\,\dx\,\dd t \\
\nonumber
&\qquad\qquad\qquad\qquad +\int^T_0\int_{\Omega}\mbb S(\D u_\veps):\D\psi\,\dx\,\dd t\,=\,\int_\Omega\rho_{0,\veps}\,u_{0,\veps}\cdot\psi(0,\cdot)\,\dx\,.
\end{align}

Observe that the viscosity term is linear in $u_\veps$, hence computing its limit is straightforward:
\begin{equation*}
\lim_{\veps\to0}\int^T_0\int_{\Omega}\mbb S(\D u_\veps):\D\psi\,\dx\,\dd t\,=\,\int^T_0\int_{\Omega}\mbb S(\D u):\D\psi\,\dx\,\dd t\,,
\end{equation*}
where $u$ is the target velocity profile identified in \eqref{conv:u_weak-H}. In addition, owing to the assumptions on the initial data, see Subsection \ref{ss:data},
it is easy to check that
\begin{equation*}
\lim_{\veps\to0}\int_\Omega\rho_{0,\veps}\,u_{0,\veps}\cdot\psi(0,\cdot)\,\dx\,=\,\int_\Omega\oline\rho\,u_{0}\cdot\psi(0,\cdot)\,\dx\,,
\end{equation*}
wheree $u_0$ is the weak-limit of the sequence of initial velocities $\big(u_{0,\veps}\big)_\veps$.
Likewise, making use of \eqref{eq:dens_dec-2}, \eqref{ub:r_eps} and \eqref{conv:u_weak-H}, we can compute
\begin{equation*}
 \lim_{\veps\to0}\int^T_0\int_{\Omega}\rho_\veps\,u_\veps\cdot\d_t\psi\,\dx\,\dd t\,=\,
 \int^T_0\int_{\Omega}\oline\rho\,u\cdot\d_t\psi\,\dx\,\dd t\,.
\end{equation*}

Therefore, in order to complete the proof, we need to consider only the convective term, that is the last term appearing in the first line of \eqref{eq:weak_incompr}.
%
Our argument to compute its limit 
is based on a compensated compactness method. This, in turn, relies
on the study of the wave system, which governs the propagation of acoustic waves: let us start by presenting it.

\subsection{The system of acoustic waves} \label{ss:k=0_wave}

Recall the definition of $\beta_\veps$ and $r_\veps$ given in \eqref{eq:b_decomp} and \eqref{eq:dens_dec-2}, respectively. We further recall the following
notation from Subsection \ref{ss:hidden}:
\[
 V_\veps\,:=\,\rho_\veps\,u_\veps\qquad\qquad \mbox{ and }\qquad\qquad W_\veps\,:=\,b_\veps\,u_\veps\,.
\]
Resorting to the decomposition \eqref{eq:constr-equal} for the magnetic pressure term, we see that we can reformulate system \eqref{eq:reduced-MHD} in the following
equivalent form:
\begin{equation} \label{eq:k=0_waves}
\left\{ \begin{array}{l}
         \veps\,\d_tr_\veps\,+\,\div V_\veps\,=\,0 \\[1ex]
         \veps\,\d_t\beta_\veps\,+\,\div W_\veps\,=\,0 \\[1ex]
         \veps\,\d_tV_\veps\,+\,\oline b\,\nabla\beta_\veps\,=\,\veps\,F_\veps\,,
        \end{array}
\right.
\end{equation}
where we have defined
\[
 F_\veps\,:=\,-\,\div\big(\rho_\veps\,u_\veps\otimes u_\veps\big)\,+\,\div\mbb S(\D u_\veps)\,-\,\nabla P(\rho_\veps)\,-\,
 \frac{1}{\veps^2}\,\mc B\big(b_\veps\,\big|\,\oline b\big)\,.
\]

Before moving on, let us make a remark.
\begin{rem} \label{r:rho-small}
The wave system \eqref{eq:k=0_waves} containts three equations which are all coupled.  Despite various strategies are possible to treat algebraically the
convective term, precisely because of this coupling
in one way or another we always need the functions $\beta_\veps$ to speak with the density perturbation functions $r_\veps$.

This is the main reason for considering initial densities which are small perturbations of a constant state and for imposing assumption \eqref{est:b-r_in},
which in turn allows us to propagate that smallness and find suitable \emph{uniform} bounds for the $r_\veps$'s.
\end{rem}

Next, we establish suitable uniform bounds for the different quantities appearing in the wave system \eqref{eq:k=0_waves}.

First of all,
observe that, owing to the uniform bounds \eqref{est:u_en}, \eqref{eq:dens_decomp}, \eqref{ub:s_eps} and \eqref{ub:BB}, by
making use of the Sobolev embedding $L^1(\Omega)\hookrightarrow H^{-3/2-\delta}(\Omega)$ for $\delta>0$ arbitrarily small, we infer that
\begin{equation} \label{ub:k=0-f_e}
\forall\,T>0\,,\qquad\qquad 
 \big(F_\veps\big)_\veps\,\Subset\,L^2\big([0,T];H^{-2}(\Omega)\big)\,.
\end{equation}
As a matter of fact, in order to gather the $L^2$ integrability in time of the $F_\veps$'s, we have to decompose the convective term in the following way,
\[
 \rho_\veps\,u_{\veps}\otimes u_\veps\, = \,\sqrt{\rho_\veps}\;\sqrt{\rho_\veps}\,u_\veps\otimes u_\veps\,,
\]
and use the embedding $H^1(\Omega)\hookrightarrow L^6(\Omega)$ to bound the last factor in $L^2_T(L^6)$.

In addition, using \eqref{est:u_en}, \eqref{eq:dens_decomp} and \eqref{ub:s_eps} again, together with \eqref{eq:b_decomp}, we see that
there exists some $p_0>1$ such that
\begin{equation*}
\forall\,T>0\,,\qquad\qquad
 \big(V_\veps\big)_\veps\,,\, \big(W_\veps\big)_\veps\;\Subset\;L^2\big([0,T];L^{p_0}(\Omega)\big)\,,
\end{equation*}
where the precise value of $p_0$ depends of course on the quantity $m\,=\,\min\{2,\g\}$.
More precisely, we can decompose those velocity fields as follows
\begin{equation} \label{eq:V-W_decomp}
 V_\veps\,=\,\mbb V_\veps\,+\,\veps\,\mc V_\veps\qquad \mbox{ and }\qquad 
W_\veps\,=\,\mbb W_\veps\,+\,\veps\,\mc W_\veps\,,
\end{equation}
where we have defined
\[
 \mbb V_\veps\,:=\,\oline\rho\,u_\veps\,,\qquad \mbb W_\veps\,:=\,\oline b\,u_{\veps}\,,\qquad
 \mc V_\veps\,:=\,r_\veps\,u_\veps\,,\qquad \mc W_\veps\,:=\,\beta_\veps\,u_\veps\,.
\]
Thus, gathering the uniform boundedness properties of Subsections \ref{ss:bounds_energy} and \ref{ss:bounds_transport}, one sees that
\begin{align}
\label{ub:V-W_decomp}
&\big(\mbb V_\veps\big)_\veps\,,\,\big(\mbb W_\veps\big)_\veps\;\Subset\; L^2_\loc\big(\R_+;H^1(\Omega)\big)\,, \qquad\qquad
\big(\mc V_\veps\big)_\veps\,,\,\big(\mc W_\veps\big)_\veps\;\Subset\;L^2_\loc\big(\R_+;L^{p_0}(\Omega)\big)\,,
\end{align}
where $p_0>1$ is fixed as above.

\subsection{Regularisation} \label{ss:k=0_reg}
As a preparation to the compensated compactness argument, which requires to work on the very algebraic structure of the
convective term by exploiting the wave system \eqref{eq:k=0_waves}, we need to perform a regularisation procedure
on the various quantities under study.

Let us introduce a smoothing operator $\mc L_M$, depending on some large parameter $M\in\N$, such that
(formally speaking) $\mc L_M\longrightarrow\Id$ when $M\to+\infty$. 
Here, we understand that $\mc L_M$ only acts with respect to
the space variables and commutes with both time and space differentiation operators.
To fix ideas, we assume here that $\mc L_M$ is a a smooth truncation on the frequency space (as $\Omega=\T^3$ in our case),
for frequencies $k\in\Z^3$ which are such that $|k|\leq M$.
Then, the following properties can be established in a classical way. First of all, we have a trivial action on constant functions:
\[
\forall\,h\in\R\,,\qquad 
 \mc L_M h\,=\,c\,h\,,
\]
where the constant $c>0$ depends only on the integral of the convolution kernel associated to the frequency cut-off operator and is, in particular,
independent of $M\in\N$.
In addition, given a function $f\in W^{\ell,p}(\Omega)$, for some $\ell\in\Z$ and $p\in[1,+\infty]$, one has that
\begin{align*}
 \big(\mc L_Mf\big)_{M}\Subset W^{\ell,p}(\Omega)\,,\qquad\qquad &\mbox{ with }\qquad \forall\,M\in\N\,,\quad 
 \left\|\mc L_Mf\right\|_{W^{\ell,p}}\,\leq\,C\,\left\|f\right\|_{W^{\ell,p}} \\
 &\qquad\qquad \mbox{ and }\qquad \mc L_Mf\,\longrightarrow\,f\quad \mbox{ in }\ W^{\ell,p}(\Omega)\,,
\end{align*}
where the constant $C>0$ is independent of $M$, $\ell$, $p$ and $f$, and where we stress that the convergence is with respect to the strong topology of
$W^{\ell,p}$ when $M\to+\infty$.
Moreover, $\mc L_Mf$ is a smooth function: for any $M\in\N$ fixed, one has
\[
 \mc L_Mf\in \bigcap_{m\in\Z}W^{m,p}\,,\qquad\qquad \mbox{ with }\qquad \forall\,m\geq\ell\,,\quad 
 \left\|\mc L_Mf\right\|_{W^{m,p}}\,\leq\,C\,2^{M(m-\ell)}\,\left\|f\right\|_{W^{\ell,p}}\,,
\]
where the multiplicative constant $C>0$ satisfies the same property as above.
Finally, let us mention that, given any $p\in[1,+\infty]$ and any $f\in W^{1,p}(\Omega)$, one has
\[
\left\|\big(\Id - \mc L_M\big)f\right\|_{L^p}\,\leq\,C\,2^{-M}\,\left\|\nabla f\right\|_{L^p}\,,
\]
where the constant $C$ is independent of $M$, $f$ and $p$.

For convenience of notation, whenever we deal with a sequence of functions $\big(f_\veps\big)_\veps$, we define
\[
f_{\veps,M}\,:=\,\mc L_Mf_\veps\,,
\]

Let us now apply the regularising operators $\mc L_M$, for large $M\in\N$, to the unknowns of our system \eqref{eq:reduced-MHD}.
Consider $u_{\veps,M}\,=\,\mc L_Mu_\veps$. By the properties listed above for the operators $\mc L_M$, combined with the uniform bound
\eqref{ub:u_H^1}, we deduce
\begin{equation} \label{est:u-e-M}
\left\|u_{\veps,M}\right\|_{L^2_T(H^s)}\,\leq\,C(T,s,M)\,,
\end{equation}
for a constant depending only on the quantities in the brackets, but not on $\veps\in\,]0,1]$.

Moreover, using the uniform bounds \eqref{ub:u_H^1} and the above properties for the operator $\mc L_M$, we immediately infer that,
for any $T>0$ fixed, one has
\begin{equation} \label{est:Id-S_M-u}
\left\|\big(\Id-\mc L_M\big)u_\veps\right\|_{L^2_T(L^2)}\,\leq\,C\,2^{-M}\,\,,
\end{equation}
for a constant $C>0$ independent of $\veps$.

In addition for later use, let us consider the target velocity profile identified in \eqref{conv:u_weak-H}. Observe that, owing to
\eqref{conv:u_weak-H} and the continuity properties of the regularising operators $\mc L_M$, we get that
\begin{equation} \label{conv:u_e-M_to_u_M}
\forall\,T>0\,,\ \forall\,M>0\,,\qquad\qquad u_{\veps,M}\,\rightharpoonup\,u_M\qquad \mbox{ in }\quad L^2\big([0,T];H^1(\Omega)\big)
\qquad \mbox{ for }\ \veps\to0\,.
\end{equation}
Moreover, thanks to the properties of the $\mc L_M$ operators and Lebesgue dominated convergence theorem, we have the strong convergence
\begin{equation} \label{conv:strong_u}
u_M\,=\,\mc L_Mu\,\longrightarrow\,u \qquad\qquad \mbox{ in }\qquad L^2\big([0,T];H^1\big)\,,\qquad \mbox{ for }\quad M\to+\infty\,.
\end{equation}
This argument shows that, if we can pass to the limit in the convective term, where we have regularised
the velocity fields, then we can easily compute the limit when the appoximation parameter $M$ goes to $+\infty$.

Finally, let us apply the regularising operators $\mc L_M$ to the wave system \eqref{eq:k=0_waves}: with the notation introduced above, since the system
has constant coefficients, we get
\begin{equation} \label{eq:k=0_waves_M}
\left\{ \begin{array}{l}
         \veps\,\d_tr_{\veps,M}\,+\,\div V_{\veps,M}\,=\,0 \\[1ex]
         \veps\,\d_t\beta_{\veps,M}\,+\,\div W_{\veps,M}\,=\,0 \\[1ex]
         \veps\,\d_tV_{\veps,M}\,+\,\oline b\,\nabla\beta_{\veps,M}\,=\,\veps\,F_{\veps,M}\,.
        \end{array}
\right.
\end{equation}
Recall that all the quantities appearing in the above system are smooth with respect to the space variable. In particular, for any $M\in\N$,
any $T>0$ and any $s>0$, we have
\begin{equation} \label{ub:F_e-M}
 \sup_{\veps\in\,]0,1]}\left\|\left(r_{\veps,M}\,,\,\beta_{\veps,M}\right)\right\|_{L^\infty_T(H^s)}\,+\,
 \sup_{\veps\in\,]0,1]}\left\|F_{\veps,M}\right\|_{L^2_T(H^s)}\,\leq\,C(s,M,T)\,,
\end{equation}
for a constant $C(s,M,T)>0$ only depending on the quantities inside the brackets. In addition, keeping in mind \eqref{eq:V-W_decomp} and \eqref{ub:V-W_decomp},
we can write
\begin{equation} \label{eq:V-W_M}
 V_{\veps,M}\,=\,\mbb V_{\veps,M}\,+\,\veps\,\mc V_{\veps,M} \qquad \mbox{ and }\qquad
W_{\veps,M}\,=\,\mbb W_{\veps,M}\,+\,\veps\,\mc W_{\veps,M}\,,
\end{equation}
where we simply have
\[
 \mbb V_{\veps,M}\,=\,\oline\rho\,u_{\veps,M} \qquad \mbox{ and }\qquad \mbb W_{\veps,M}\,=\,\oline b\,u_{\veps,M}\,,
\]
together with the properties
\begin{equation} \label{ub:vf_e-M}
  \sup_{\veps\in\,]0,1]}\left\|\left(\mc V_{\veps,M}\,,\,\mc W_{\veps,M}\right)\right\|_{L^2_T(H^s)}\,\leq\,C(s,M,T)\,.
\end{equation}

\subsection{Approximation of the convective term} \label{ss:k=0_approx}

After developing the previous preparatory material, we can tackle the proof of the convergence of the convective term, namely of the quantity
\[
\int^T_0\int_{\Omega}\rho_\veps\,u_\veps\otimes u_\veps:\nabla\psi\,\dx\,\dd t\,,
\]
where the test-function $\psi$ is as in \eqref{eq:k=0_test}.

Some approximation is however still needed. To begin with, we observe that, thanks to the bound established in \eqref{est:Id-S_M-u},
we have the equality
\begin{align*}
 \lim_{\veps\to0}\int^T_0\int_{\Omega}\rho_\veps\,u_\veps\otimes u_\veps:\nabla\psi\,\dx\,\dd t\,&=\,
 \lim_{M\to+\infty}\limsup_{\veps\to0}\int^T_0\int_{\Omega}\rho_\veps\,u_\veps\otimes u_{\veps,M}:\nabla\psi\,\dx\,\dd t \\
 &=\,\lim_{M\to+\infty}\limsup_{\veps\to0}\frac{1}{\oline\rho}\int^T_0\int_{\Omega}\rho_\veps\,u_\veps\otimes \mbb V_{\veps,M}:\nabla\psi\,\dx\,\dd t
 \,. 
\end{align*}
Using formula \eqref{eq:V-W_M} for $V_{\veps,M}$, asince now this quantity is smooth and as much integrable as we want in space, we immediately infer that
\begin{align*}
 \lim_{\veps\to0}\int^T_0\int_{\Omega}\rho_\veps\,u_\veps\otimes u_\veps:\nabla\psi\,\dx\,\dd t\,&=\,
\lim_{M\to+\infty}\limsup_{\veps\to0}\frac{1}{\oline\rho}\int^T_0\int_{\Omega} V_\veps \otimes V_{\veps,M}:\nabla\psi\,\dx\,\dd t
 \,,
\end{align*}
where we recall that $V_\veps\,:=\,\rho_\veps\,u_\veps$. Now, we can use the following decomposition, which follows from the definitions:
\begin{align*}
V_{\veps}\,=\,\mbb V_{\veps}\,+\,\veps \mc V_{\veps}\,=\,\mbb V_{\veps,M}\,+\,\oline\rho\,\big(\Id-\mc L_M\big)u_\veps\,+\,\veps\,\mc V_{\veps}\,.
\end{align*}
Owing to properties \eqref{ub:V-W_decomp} and \eqref{est:Id-S_M-u}, we then deduce that 
\begin{align*}
 \lim_{\veps\to0}\int^T_0\int_{\Omega}\rho_\veps\,u_\veps\otimes u_\veps:\nabla\psi\,\dx\,\dd t\,&=\,
\lim_{M\to+\infty}\limsup_{\veps\to0}\frac{1}{\oline\rho}\int^T_0\int_{\Omega} \mbb V_{\veps,M} \otimes V_{\veps,M}:\nabla\psi\,\dx\,\dd t \,.
\end{align*}
Finally, a further application of the decomposition \eqref{eq:V-W_M} allows to find that
\begin{align} \label{eq:final_convect}
 \lim_{\veps\to0}\int^T_0\int_{\Omega}\rho_\veps\,u_\veps\otimes u_\veps:\nabla\psi\,\dx\,\dd t\,&=\,
\lim_{M\to+\infty}\limsup_{\veps\to0}\frac{1}{\oline\rho}\int^T_0\int_{\Omega} V_{\veps,M} \otimes V_{\veps,M}:\nabla\psi\,\dx\,\dd t \,.
\end{align}

Thus, from now on we will focus on computing
\begin{equation} \label{eq:k=0_to-prove}
 \limsup_{\veps\to0}\frac{1}{\oline\rho}\int^T_0\int_{\Omega} V_{\veps,M} \otimes V_{\veps,M}:\nabla\psi\,\dx\,\dd t\,,
\end{equation}
up to terms which must vanish in the subsequent limit $M\to+\infty$.

\subsection{Computing the limit \tsl{via} compensated compactness} \label{ss:k=0_comp-comp}

In light of the previous computations, we only have to consider the convergence of the integral appearing in \eqref{eq:k=0_to-prove}.
In order to do so, let us introduce the Leray-Helmholtz decomposition over $\Omega$: we denote
by $\P$ and $\Q$ the $L^2$-orthogonal projections onto, respectively, the space of divergence-free vector fields and its orthogonal complement.
Recall that, for any vector field $H\in L^2(\Omega;\R^3)$, one has $\div \P H = 0$ and
$\Q H = \nabla\Phi$, for a suitable function $\Phi\in \dot H^1(\Omega)$ such that 
$-\Delta\Phi = -\div H$.

Now, let us apply the operator $\P$ to the regularised wave system \eqref{eq:k=0_waves_M}, and more precisely to the last equation therein.
Since $\P\nabla\beta_{\veps,M} = 0$, we get
\[
 \d_t\P V_{\veps,M}\,=\,\P F_{\veps,M}\,.
\]
Thus, owing to the first equality in \eqref{eq:V-W_M} and to the boundedness properties \eqref{est:u-e-M}, \eqref{ub:F_e-M} and \eqref{ub:vf_e-M}, 
an application of the Aubin-Lions lemma yields that, for any \emph{fixed} $M\in\N$, the sequence (in $\veps$!)
$\big(\P V_{\veps,M}\big)_\veps$ is compact in $L^2\big([0,T]\times\Omega\big)$, for any $T>0$ fixed.
Thanks to \eqref{eq:V-W_M} again together with \eqref{conv:u_e-M_to_u_M}, we infer that
\begin{equation} \label{conv:strong_PV}
 \forall\,T>0\,,\ \forall\,M\in\N\,,\qquad\qquad \P V_{\veps,M}\,\longrightarrow\,\oline\rho\,\P u_M = \oline\rho\,u_M\qquad \mbox{ in }\quad L^2\big([0,T]\times\Omega\big)
\end{equation}
in the limit $\veps\to0$, where we have also used the constraint \eqref{constr:div} to write $\P u_M = u_M$.
Of course, \eqref{constr:div} also implies, together with \eqref{conv:u_e-M_to_u_M} and \eqref{eq:V-W_M}, the weak convergence
\begin{equation} \label{conv:QV}
 \Q V_{\veps,M}\,\rightharpoonup\,0\qquad\qquad \mbox{ in }\qquad L^2\big([0,T];H^1(\Omega)\big)
\end{equation}
in the limit $\veps\to0$, for any value of $M\in\N$ fixed.

Let us assume the next statement for a while.
\begin{lemma} \label{l:QxQ}
 Let $\psi$ be a test-function as in \eqref{eq:k=0_test}, with $\Supp\psi\subset[0,T]\times\Omega$, for some $T>0$. Then we have
\[
 \lim_{M\to+\infty}\limsup_{\veps\to0}\frac{1}{\oline\rho}\int^T_0\int_{\Omega} \Q V_{\veps,M} \otimes \Q V_{\veps,M}:\nabla\psi\,\dx\,\dd t\,=\,0
\]
\end{lemma}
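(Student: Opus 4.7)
The plan is to exploit the potential structure of $\Q V_{\veps,M}$ together with the wave system \eqref{eq:k=0_waves_M} in order to transfer the non-linearity onto the small factors, and then to pass to the limit using the strong convergence of $\alpha_\veps$ proved in Subsection \ref{ss:hidden}. Write $\Q V_{\veps,M}\,=\,\nabla\Phi_{\veps,M}$ so that $\Delta\Phi_{\veps,M}\,=\,\div V_{\veps,M}$. A direct integration by parts yields the pointwise identity
\[
\int_\Omega\big(\nabla\Phi_{\veps,M}\otimes\nabla\Phi_{\veps,M}\big):\nabla\psi\,\dx\,=\,\int_\Omega\frac{|\nabla\Phi_{\veps,M}|^2}{2}\,\div\psi\,\dx\,-\,\int_\Omega\Delta\Phi_{\veps,M}\,\nabla\Phi_{\veps,M}\cdot\psi\,\dx\,,
\]
whose first term vanishes because $\div\psi=0$. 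Inserting the first equation of \eqref{eq:k=0_waves_M}, namely $\Delta\Phi_{\veps,M}\,=\,-\,\veps\,\d_tr_{\veps,M}$, reduces the matter to controlling $\veps\int_0^T\int_\Omega\d_tr_{\veps,M}\,\Q V_{\veps,M}\cdot\psi\,\dx\,\dd t$.

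I would then integrate by parts in time. Since $\psi\in C^\infty_0\big([0,T[\,\times\Omega\big)$, no contribution arises at $t=T$, while the initial boundary term is $\veps\,\int_\Omega r_{0,\veps,M}\,\Q V_{\veps,M}(0)\cdot\psi(0)\,\dx$, which is of order $O(\veps)$ by the uniform bounds on $r_{0,\veps}$ and $V_{0,\veps}$; likewise the $\veps\int r_{\veps,M}\,\Q V_{\veps,M}\cdot\d_t\psi$ term is $O(\veps)$. It remains to handle $\veps\int r_{\veps,M}\,\d_t\Q V_{\veps,M}\cdot\psi$, into which I plug $\veps\,\d_t\Q V_{\veps,M}\,=\,-\,\oline b\,\nabla\beta_{\veps,M}\,+\,\veps\,\Q F_{\veps,M}$ coming from the third equation of \eqref{eq:k=0_waves_M}. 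The $\Q F_{\veps,M}$-contribution is again $O(\veps)$ by \eqref{ub:F_e-M}, and one is left with studying
\[
\oline b\,\int^T_0\int_\Omega r_{\veps,M}\,\nabla\beta_{\veps,M}\cdot\psi\,\dx\,\dd t\,.
\]

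At this stage I would use the algebraic identity $\oline b\,r_{\veps,M}\,=\,\oline\rho\,\beta_{\veps,M}\,-\,\alpha_{\veps,M}$, which follows from the definition of $\alpha_\veps$ and the commutation of $\mc L_M$ with linear operations. The contribution of $\beta_{\veps,M}\,\nabla\beta_{\veps,M}\,=\,\nabla\big(\beta_{\veps,M}^2/2\big)$ tested against the divergence-free $\psi$ vanishes exactly. Hence the whole expression in \eqref{eq:k=0_to-prove} reduces, up to vanishing remainders, to $-\int_0^T\int_\Omega\alpha_{\veps,M}\,\nabla\beta_{\veps,M}\cdot\psi\,\dx\,\dd t$ (multiplied by $1/\oline\rho$). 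For this term I would first let $\veps\to0$ with $M$ fixed: thanks to \eqref{conv:alpha_strong} and the smoothing effect of $\mc L_M$, $\alpha_{\veps,M}\to\alpha_M\,:=\,\mc L_M\alpha$ strongly in, say, $L^2_\loc\big(\R_+;L^2(\Omega)\big)$, while $\nabla\beta_{\veps,M}\,\stackrel{*}{\rightharpoonup}\,\nabla\beta_M$ by \eqref{conv:b-r_weak} and the continuity of $\mc L_M$, so the integral converges to $-\int_0^T\int_\Omega\alpha_M\,\nabla\beta_M\cdot\psi\,\dx\,\dd t$. Sending then $M\to+\infty$, $\alpha_M\to\alpha$ and $\beta_M\to\beta$ in the natural topologies, and one obtains the limit $-\int_0^T\int_\Omega\alpha\,\nabla\beta\cdot\psi\,\dx\,\dd t$. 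The conclusion follows from the constraint \eqref{eq:beta-triv}, which in the case $\k=0$ forces $\nabla\beta\equiv0$, and the lemma is proved.

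The main obstacle is the emergence of the product $r_\veps\,\nabla\beta_\veps$, which is not directly treatable by weak/weak compactness: the essential gain comes from rewriting it through $\alpha_\veps$ in order to exploit the strong compactness result of Subsection \ref{ss:hidden}, a step which crucially relies on our key assumption \eqref{est:b-r_in} on the initial data.
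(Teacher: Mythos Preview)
Your proposal is correct and follows essentially the same route as the paper's proof: the potential identity for $\Q V_{\veps,M}=\nabla\Phi_{\veps,M}$ is just a rewriting of the paper's decomposition $\div(\Q V\otimes\Q V)=\div(V)\,\Q V+\tfrac12\nabla|\Q V|^2$ (using $\curl\Q V=0$), and the subsequent use of the wave system, the integration by parts in time, the substitution $\oline b\,r_{\veps,M}=\oline\rho\,\beta_{\veps,M}-\alpha_{\veps,M}$, and the compactness of $\alpha_{\veps,M}$ all match. The only cosmetic difference is that the paper observes directly that $\nabla\beta_M=0$ for each fixed $M$ (since $\beta=\beta(t)$ and $\mc L_M$ maps constants to constants), which spares the somewhat delicate step of making sense of the product $\alpha\,\nabla\beta$ after sending $M\to+\infty$; you may want to adopt this shortcut.
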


Then, we can complete the proof of the convergence of the integral in \eqref{eq:final_convect}. Indeed, using the decomposition
$V_{\veps,M}\,=\,\P V_{\veps,M} + \Q V_{\veps,M}$ together with
\eqref{conv:strong_PV}, \eqref{conv:QV} and \eqref{conv:strong_u}, the previous lemma used implies that
\begin{align*}
 \lim_{\veps\to0}\int^T_0\int_{\Omega}\rho_\veps\,u_\veps\otimes u_\veps:\nabla\psi\,\dx\,\dd t\,&=\,
\lim_{M\to+\infty}\limsup_{\veps\to0}\frac{1}{\oline\rho}\int^T_0\int_{\Omega} V_{\veps,M} \otimes V_{\veps,M}:\nabla\psi\,\dx\,\dd t \\
&=\,\int^T_0\int_\Omega \oline\rho\,u\otimes u:\nabla\psi\,\dx\,\dd t\,,
\end{align*}
thus completing the proof of Theorem \ref{th:limit}.

Therefore, let us now tackle the proof of Lemma \ref{l:QxQ}.
In our argument, we will use the following convenient notation:
we will denote by $\mc R_{\veps,M}$  any remainder term, i.e. any term having the property that
\begin{equation} \label{eq:remainder}
\lim_{M\ra+\infty}\,\limsup_{\veps\ra0}\,\left|\int^T_0\int_\Omega \mc{R}_{\veps,M}\,\cdot\,\psi\,\dx\,\dd t\right|\,=\,0
\end{equation}
for all test functions $\psi$ as in \eqref{eq:k=0_test}.

\begin{proof}[Proof of Lemma \ref{l:QxQ}]
Since all the quantities appearing in the integral are smooth in space, we can perform an integration by parts and reduce our study to the convergence of the term
$\div\big(\Q V_{\veps,M}\otimes\Q V_{\veps,M}\big)$. Direct algebraic manipulations give us
\begin{align*}
\div\big(\Q V_{\veps,M}\otimes \Q V_{\veps,M}\big)\,&=\,\div\big( \Q V_{\veps,M}\big)\;\Q V_{\veps,M}\,+\,\big(\Q V_{\veps,M}\cdot\nabla\big)\Q V_{\veps,M} \\
&=\,\div\big( \Q V_{\veps,M}\big)\;\Q V_{\veps,M}\,+\,\frac{1}{2}\nabla\big|\Q V_{\veps,M}|^2\,+\,\curl\big(\Q V_{\veps,M}\big)\,\times\,\Q V_{\veps,M}\,.
\end{align*}
Observe that the gradient term identically vanishes, when tested against a test-function $\psi$ satisfying the properties in \eqref{eq:k=0_test}.
At the same time, by definition of the projection operator $\Q$ one has
\[
 \div\big(\Q V_{\veps,M}\big)\,=\,\div\big( V_{\veps,M}\big) \qquad \mbox{ and }\qquad \curl\big(\Q V_{\veps,M}\big)\,=\,0\,.
\]
Thus, resorting to the notation introduced in \eqref{eq:remainder}, the above equality becomes
\begin{align*}
\div\big(\Q V_{\veps,M}\otimes \Q V_{\veps,M}\big)\,&=\,\div\big( V_{\veps,M}\big)\;\Q V_{\veps,M}\,+\,\mc R_{\veps,M}\,.
\end{align*}

At this point, we make use of the wave system \eqref{eq:k=0_waves_M} to further write, in the sense of distributions, the equality
\begin{align*}
\div\big(\Q V_{\veps,M}\otimes \Q V_{\veps,M}\big)\,&=\,-\,\veps\,\d_tr_{\veps,M}\;\Q V_{\veps,M}\,+\,\mc R_{\veps,M} \\
&=\,r_{\veps,M}\,\veps\,\d_t\Q V_{\veps,M}\,+\,\mc R_{\veps,M} \\
&=\,\veps\,r_{\veps,M}\,\Q F_{\veps,M}\,-\,\oline b\,r_{\veps,M}\,\nabla\beta_{\veps,M}\,+\,\mc R_{\veps,M} \\
&=\,-\,\oline b\,r_{\veps,M}\,\nabla\beta_{\veps,M}\,+\,\mc R_{\veps,M}\,.
\end{align*}
Observe that the term presenting the total time derivative has been treated as a remainder term, in the sense of \eqref{eq:remainder}. Indeed, the time derivative
can be put on the test-function and the factor $\veps$ in front of the integral makes everything vanish in the limit $\veps\to0$.

As a last step, we introduce the function
\[
 \alpha_{\veps,M}\,:=\,\oline\rho\,\beta_{\veps,M}\,-\,\oline b\,r_{\veps,M}\,=\,\mc L_M(\alpha_\veps)\,,
\]
where $\alpha_\veps$ is precisely the quantity defined in Subsection \ref{ss:hidden}. Then, we can write
\[
 -\,\oline b\,r_{\veps,M}\,\nabla\beta_{\veps,M}\,=\,\alpha_{\veps,M}\,\nabla\beta_{\veps,M}\,-\,\frac{\oline\rho}{2}\,\nabla\beta_{\veps,M}^2\,.
\]
Insterting this equality in the previous one, and moving the pure gradient term into the remainder $\mc R_{\veps,M}$ (as one has $\div\psi=0$),
we find
\begin{align*}
\div\big(\Q V_{\veps,M}\otimes \Q V_{\veps,M}\big)\,&=\,\alpha_{\veps,M}\,\nabla\beta_{\veps,M}\,+\,\mc R_{\veps,M}\,.
\end{align*}
The first term on the right-hand side is still non-linear, so some compactness in space-time is needed on one of the two factors to pass to the limit in it.
Observe that, repeating \tsl{mutatis mutandis} the computations of Subsection \ref{ss:hidden}, from equations \eqref{eq:k=0_waves_M}
and \eqref{eq:V-W_M} we get that
\[
 \d_t\alpha_{\veps,M}\,=\,\div\Big(\oline\rho\,\mc W_{\veps,M}\,-\,\oline b\,\mc V_{\veps,M}\Big)\,,
\]
which implies that, for any $M\in\N$ \emph{fixed}, the sequence $\big(\alpha_{\veps,M}\big)_\veps$ is compact in \tsl{e.g.} the space
$L^2\big([0,T]\times \Omega\big)$. Therefore, one infers that, for any $M\in\N$ fixed, one has
\[
\alpha_{\veps,M}\,\nabla\beta_{\veps,M}\,{\rightharpoonup}\,\alpha_M\,\nabla\beta_M\qquad\qquad \mbox{ in }\qquad L^2\big([0,T]\times\Omega\big)
\]
in the limit $\veps\to0$. By linearity of the smoothing operators $\mc L_M$, however, $\beta_M=\mc L_M\beta$, with $\beta$ satisfying
\eqref{eq:beta-triv}. Hence, $\beta=\beta(t)$ is independent of the space variable, so
$\mc L_M\beta(t)\,=\,c\,\beta(t)$, where the constant $c>0$ depends only on the integral of the convolution kernel associated to the smoothing operator $\mc L_M$.
This implies that $\nabla \beta_M=0$, thus yielding that
\[
 \alpha_{\veps,M}\,\nabla\beta_{\veps,M}\,=\,\mc R_{\veps,M}\qquad\qquad \mbox{ in the sense of relation \eqref{eq:remainder}.}
\]
With this property at hand, the proof of Lemma \ref{l:QxQ} is completed.
\end{proof}

\section{Convergence in presence of fast rotation: the case $\k=1$} \label{s:conv-1}
In this section, we consider the case $\k=1$ and complete the proof of Theorem \ref{th:lim_Coriols}.
Recall that, in this framework, the space domain is $\Omega\,=\,\T^2\times\,]0,1[\,$ and we assume the complete-slip boundary conditions
\eqref{bc:compl-slip}. At the same time, as mentioned in the Introduction (see also the discussion in Subsection \ref{ss:k=1_convect} below), we recall that
the problem \eqref{eq:reduced-MHD}-\eqref{bc:compl-slip} can be transformed into a periodic problem also with respect to the third variable.

Observe that the velocity field now satisfies the constraints \eqref{constr:TP_k=1}. Therefore, we need to pass to the limit
in the weak formulation \eqref{eq:u_weak} of the momentum equation by using test-functions $\psi$ which, this time, satisfy
\begin{equation} \label{eq:k=1_test}
 \psi\,=\,\big(\nabla^\perp_h\vphi(t,x_h),0\big)\,,\qquad\qquad \mbox{ with }\qquad \vphi\in C^\infty_0\big(\R_+\times\T^2\big)\,.
\end{equation}
Notice that, in particular, one still has that $\div \psi=0$.
Therefore, after denoting $T>0$ the time such that $\Supp\vphi\subset[0,T]\times\T^2$, equation \eqref{eq:u_weak} tested on such a $\psi$ becomes
\begin{align}
\label{eq:weak_rotat}
&-\int^T_0\int_{\Omega}\rho_\veps\,u_\veps\cdot\d_t\psi\,\dx\,\dd t\,-\,\int^T_0\int_{\Omega}\rho_\veps\,u_\veps\otimes u_\veps:\nabla\psi\,\dx\,\dd t \\
\nonumber
&\qquad\quad 
\frac{1}{\veps}\int^T_0\int_\Omega e_3\times\rho_\veps\,u_\veps\cdot\psi\,\dx\,\dd t\,+\,
\int^T_0\int_{\Omega}\mbb S(\D u_\veps):\D\psi\,\dx\,\dd t\,=\,\int_\Omega\rho_{0,\veps}\,u_{0,\veps}\cdot\psi(0,\cdot)\,\dx\,.
\end{align}

Before proceeding further, let us recall the following notation from Subsection \ref{ss:result}: for any function $f$ defined on $\Omega$, we denote
by $\lan f\ran$ its vertical average, namely the quantity
\[
 \lan f\ran = \lan f\ran (t,x_h)\,:=\,\int_0^1f(t,x_h,z)\,\dd z\,.
\]
With this notation, we can start simplifying some terms in \eqref{eq:weak_rotat} and compute their limit for $\veps\to0$.

\subsection{Preliminary convergence results} \label{ss:k=1_first-converg}

By using the fact that $\psi$ in \eqref{eq:k=1_test} is horizontal (that is, $\psi_3\equiv0$) and only depends on the horizontal variables, we can rewrite
the term involving the initial datum as
\[
 \int_\Omega\rho_{0,\veps}\,u_{0,\veps}\cdot\psi(0,\cdot)\,\dx\,=\,\int_{T^2}\lan\rho_{0,\veps}\,\big(u_{0,\veps}\big)_h\ran\cdot\nabla_h^\perp\vphi(0,\cdot)\,\dd x_h\,.
\]
Using the assumptions fixed in Subsection \ref{ss:data}, it is then easy to compute
\begin{align*}
 \lim_{\veps\to0}\int_\Omega\rho_{0,\veps}\,u_{0,\veps}\cdot\psi(0,\cdot)\,\dx\,&=\,\int_{T^2}\oline\rho\, \lan u_{0,h}\ran\cdot\nabla_h^\perp\vphi(0,\cdot)\,\dd x_h \\
 &=\,-\int_{\T^2}\oline\rho\,\lan \o_0\ran\, \vphi(0,\cdot)\,\dd x_h\,,
\end{align*}
where we have defined $\o_0\,:=\,\curlh(u_{0,h})\,=\,\d_1u_{0,2}\,-\,\d_2u_{0,1}$.
The last integration by parts in the above equation is in fact not needed at this stage, but it will turn out to be useful in a while.

Similarly, owing to \eqref{eq:dens_dec-2} and \eqref{conv:u_weak-H}, we can pass to the limit in the $\d_t$-term.
As a matter of fact, using the properties appearing in
\eqref{constr:TP_k=1}, and precisely that $u_h = \frac{\oline b}{\oline\rho} \nabla_h^\perp\beta$ does not depend on $x_3$, we obtain the following series of equalities:
\begin{align*}
 \lim_{\veps\to0}\left(-\int^T_0\int_{\Omega}\rho_\veps\,u_\veps\cdot\d_t\psi\,\dx\,\dd t\right)\,&=\,
\lim_{\veps\to0}\left(-\int^T_0\int_{\T^2}\lan\rho_\veps\,u_{\veps,h}\ran \cdot\d_t\nabla_h^\perp\vphi\,\dd x_h\,\dd t\right) \\
&=\,-\,\oline\rho\int^T_0\int_{\T^2}u_{h} \cdot\d_t\nabla_h^\perp\vphi\,\dd x_h\,\dd t \\
&=\,-\,\oline b\int^T_0\int_{\T^2}\nabla_h\beta \cdot\d_t\nabla_h\vphi\,\dd x_h\,\dd t \\
&=\,\oline b\int^T_0\int_{\T^2}\Delta_h\beta \cdot\d_t\vphi\,\dd x_h\,\dd t \,,
\end{align*}
where we have also exploited the fact that, for any couple of vectors $A$ and $B$ of $\R^2$, one has $A^\perp\cdot B^\perp = A\cdot B$.
Observe that, with an abuse of notation, in the last line we have denoted as an integral a duality pair in the sense of distributions.

As for the viscosity term, we use the facts that 
$\psi_3=0$ and that $\psi$ does not depend on the vertical variable to write
\begin{align*}
 \int^T_0\int_{\Omega}\mbb S(\D u_\veps):\D\psi\,\dx\,\dd t \,&=\, \int^T_0\int_{\Omega}\D u_\veps:\D\psi\,\dx\,\dd t \\
 &=\,\int^T_0\int_{\T^2}\lan\D_h u_{\veps,h}\ran :\D_h\psi_h\,\dd x_h\,\dd t\,,
\end{align*}
where $\D_hM$ denotes the symmetric part of the Jacobian of a vector field $M$ only with respect to the horizontal derivatives $\d_1$ and $\d_2$.
Taking advantage of \eqref{conv:u_weak-H} again, we then find
\begin{align*}
\lim_{\veps\to0} \int^T_0\int_{\Omega}\mbb S(\D u_\veps):\D\psi\,\dx\,\dd t \,&=\,\int^T_0\int_{\T^2}\D_h u_{h} :\D_h\psi_h\,\dd x_h\,\dd t \\
&=\,\int^T_0\int_{\T^2}\Delta_h \o\, \vphi\,\dd x_h\,\dd t\,,
\end{align*}
where in the last line we have integrated by parts twice (as $\psi_h =\nabla_h^\perp\vphi$) and denoted $\o\,=\,\curlh u_h = \d_1u_2\,-\,\d_2u_1$.
Again, we have adopted an abuse of notation by writing an integral instead of a duality pairing in the sense of distributions.
Notice that $\o = \frac{\oline b}{\oline\rho}\Delta_h\beta$, in fact.

Therefore, to conclude the proof of Theorem \ref{th:lim_Coriols}, it remains only to deal with the Coriolis term and the convective term in \eqref{eq:weak_rotat}:
this is the goal of the next two subsections.

\subsection{The limit of the rotation term} \label{ss:k=1_rot}
We consider here the convergence of the Coriolis term, namely of the integral
\[
 \frac{1}{\veps}\int^T_0\int_\Omega e_3\times\rho_\veps\,u_\veps\cdot\psi\,\dx\,\dd t\,,
\]
where $\psi$ satisfies \eqref{eq:k=1_test}. In fact, exploiting the properties of such $\psi$ and the definition of the rotation operator, we can write
\begin{align*}
 \frac{1}{\veps}\int^T_0\int_\Omega e_3\times\rho_\veps\,u_\veps\cdot\psi\,\dx\,\dd t\,&=\,
 \frac{1}{\veps}\int^T_0\int_\Omega \rho_\veps\,u_{\veps,h}^\perp\cdot\psi_h\,\dx\,\dd t \\
 &=\,\frac{1}{\veps}\int^T_0\int_{\T^2} \lan\rho_\veps\,u_{\veps,h}^\perp\ran\cdot\nabla^\perp_h\vphi\,\dd x_h\,\dd t \\
 &=\,\frac{1}{\veps}\int^T_0\int_{\T^2} \lan\rho_\veps\,u_{\veps,h}\ran\cdot\nabla_h\vphi\,\dd x_h\,\dd t\,.
\end{align*}

At this point, we consider the mass equation, namely the first equation appearing in system \eqref{eq:reduced-MHD}. Testing it
against the chosen function $\vphi$, using that $\vphi$ does not depend on $x_3$, we get
\begin{align*}
-\int^T_0\int_{\T^2}\lan\rho_\veps-\oline\rho\ran\,\d_t\vphi\,\dd x_h\,\dd t\,-\,\int^T_0\int_{\T^2}\lan \rho_\veps\,u_{\veps,h}\ran\cdot\nabla_h\vphi\,\dd x_h\, \dd t\,=\,
\int_{\T^2}\lan\rho_{0,\veps}-\oline\rho\ran\,\vphi(0,\cdot)\,\dd x_h\,.
\end{align*}
In light of this equation, using relation \eqref{eq:dens_dec-2} and the same decomposition for the initial datum, we can remove the singularity
of the Coriolis term by writing
\begin{align*}
 \frac{1}{\veps}\int^T_0\int_\Omega e_3\times\rho_\veps\,u_\veps\cdot\psi\,\dx\,\dd t\,&=\,
 \frac{1}{\veps}\int^T_0\int_{\T^2} \lan\rho_\veps\,u_{\veps,h}\ran\cdot\nabla_h\vphi\,\dd x_h\,\dd t \\
 &=\,-\int^T_0\int_{\T^2}\lan r_\veps\ran\,\d_t\vphi\,\dd x_h\,\dd t\,-\,
 \int_{\T^2}\lan r_{0,\veps}\ran\,\vphi(0,\cdot)\,\dd x_h\,.
\end{align*}
We can thus compute its limit, finding
\begin{align*}
\lim_{\veps\to0} \frac{1}{\veps}\int^T_0\int_\Omega e_3\times\rho_\veps\,u_\veps\cdot\psi\,\dx\,\dd t\,&=\,
-\int^T_0\int_{\T^2}\lan r\ran\,\d_t\vphi\,\dd x_h\,\dd t\,-\, \int_{\T^2}\lan r_{0}\ran\,\vphi(0,\cdot)\,\dd x_h\,,
\end{align*}
where $r_0$ has been introduced in \eqref{conv:r-beta_init} and $r$ has been identified in \eqref{conv:b-r_weak}.

Before moving on, we formulate a couple of important observations.

\begin{rem} \label{r:vphi}
Our treatement of the Coriolis term somehow justifies the fact that, in the computations of Subsection \ref{ss:k=1_first-converg}, we have written
all the terms as scalar quantities tested against the scalar test-function $\vphi$, instead of keeping the vectorial equation computed on the vectorial
test-function $\psi$.
\end{rem}

\begin{rem} \label{r:r-beta}
While the terms of Subsection \ref{ss:k=1_first-converg} are written in terms of $u_h$, thus of $\beta$, the limit of the Coriolis term
depend on the target function $r$. Thus, in order to find a closed system, \emph{differently} from what usually happens in the low Mach number limit,
or in the low Rossby number limit, one needs to find an additional relation linking $r$ and $\beta$, which is exactly provided
by the equation found in Subsection \ref{ss:hidden}, see \eqref{conv:alpha_strong} and \eqref{eq:alpha-lim} above.
\end{rem}

\subsection{Convergence of the convective term} \label{ss:k=1_convect}

To conclude the proof of Theorem \ref{th:lim_Coriols}, it remains to deal once again, as in Section \ref{s:conv-0}, with the convective term. 
The technique is similar to the one employed for the case $\k=0$; however, since the domain
$\Omega$ is different here, we need to perform some preliminary reductions.

More precisely, we notice that the problem \eqref{eq:reduced-MHD} on $\Omega = \T^2\times\,]0,1[\,$ with complete-slip boundary conditions \eqref{bc:compl-slip}
can be recasted (see \cite{Ebin}) as a periodic problem also with respect to the vertical variable, in the new domain
$$
\wtilde\Omega\,=\,\T^2\,\times\,\wtilde{\mbb{T}}\,,\qquad\qquad \mbox{ with }\qquad \wtilde{\mbb{T}}\,:=\,[-1,1]/\sim\,,
$$
where $\sim$ denotes the equivalence relation which identifies $-1$ and $1$. Indeed, the equations remain invariant if we extend
$\rho$, $b$ and $u_h = (u_1,u_2)$ as even functions with respect to $x_3$, and $u_3$ as an odd function.
Therefore, from now on we will consider the equations \eqref{eq:reduced-MHD} set on the new space domain $\wtilde\Omega$.

Now that the problem has become periodic on all the variables, we can easily seen that the analysis of Subsections \ref{ss:k=0_reg}
and \ref{ss:k=0_approx}, based on regularising the velocity fields and approximating the convective term by the same term, but computed on the regularised
vector fields, can be applied also in this context $\k=1$. Therefore, without repeating all the passages here, from now on we will assume
that all the quantities we are dealing with are smooth with respect to the space variable, and our goal becomes to compute the limit
\begin{align}
\label{eq:k=1_lim-conv}
 \lim_{\veps\to0}\int^T_0\int_{\Omega}\rho_\veps\,u_\veps\otimes u_\veps:\nabla\psi\,\dx\,\dd t\,&=\,
 \lim_{\veps\to0}\frac{1}{\oline\rho}\int^T_0\int_{\Omega}V_\veps\otimes V_\veps:\nabla\psi\,\dx\,\dd t \\
\nonumber
 &=\, \lim_{\veps\to0}\frac{1}{\oline\rho}\int^T_0\int_{\T^2}\lan V_{\veps,h}\otimes V_{\veps,h}\ran :\nabla_h\nabla^\perp_h\vphi\,\dd x_h\,\dd t \,,
\end{align}
where we have used that the test-function $\psi$ satisfies the conditions in \eqref{eq:k=1_test} and we have resorted to the notation
$V_\veps\,=\,\rho_\veps\,u_\veps$ from Subsection \ref{ss:k=0_wave}.

At this point, we need to introduce the new wave system pertaining to the case $\k=1$.

\subsubsection{The system of acoustic-Poincar\'e waves} \label{sss:wav_Poinc-ac}

Adopting the same notation introduced in Subsection \ref{ss:k=0_wave}, we easily see that, for $\k=1$, we can recast equations \eqref{eq:reduced-MHD} as
\begin{equation} \label{eq:k=1_waves}
\left\{ \begin{array}{l}
         \veps\,\d_tr_\veps\,+\,\div V_\veps\,=\,0 \\[1ex]
         \veps\,\d_t\beta_\veps\,+\,\div W_\veps\,=\,0 \\[1ex]
         \veps\,\d_tV_\veps\,+\,\oline b\,\nabla\beta_\veps\,+\,e_3\times V_\veps\,=\,\veps\,F_\veps\,.
        \end{array}
\right.
\end{equation}
Observe that the rotation term comes into play as an $O(1)$ term, affecting the propagation of waves. Owing to the analogy with what happens for fast rotating fluids
(see \cite{C-D-G-G}, for instance),
we call the waves described by system \eqref{eq:k=1_waves} with the name of acoustic-Poincar\'e waves.

Remark that system \eqref{eq:k=1_waves} is still linear with constant coefficients, so applying the regularisation operator yields the same wave system satisfied also
by the regularised quantities, precisely as applying $\mc L_M$ allows one to pass from \eqref{eq:k=0_waves} to \eqref{eq:k=0_waves_M}.
Therefore, according with the discussion above, we can safely assume that all the functions appearing in the wave system
are smooth with respect to the space variable and use these relations when computing the limit of \eqref{eq:k=1_lim-conv},
where we also suppose that $V_\veps$ is smooth.

Next, we introduce the following  decomposition: for a three-dimensional vector-field $X$ defined over $\wtilde\Omega$, we write
\begin{equation} \label{dec:vert-av}
X(x)\,=\,\langle X\rangle(x_h)\,+\,\wtilde{X}(x)\,,\qquad\qquad \mbox{ where }\qquad
\langle X\rangle(x_h)\,:=\,\frac{1}{|\wtilde\T|}\int_{\wtilde{\mbb{T}}}X(x_h,x_3)\,dx_3\,,
\end{equation}
where $|\wtilde\T|$ denotes the Lebesgue measure of the one-dimensional torus $\wtilde\T$.
Observe that, since one simply has $|\wtilde\T|=2$, the notion of vertical average is consistent with the previous one (the one performed over $\Omega$),
when computed on functions which are independent of $x_3$, so we will not make any distinction at the notational level
(that is, we will not keep track of the constants $|\wtilde \T|$, as they will just simplify at the limit).

We remark that $\wtilde{X}$ has zero vertical average, \tsl{i.e.} $\lan \wtilde X\ran=0$. Hence we can write $\wtilde{X}(x)\,=\,\d_3\wtilde{Z}(x)$,
with $\wtilde{Z}$ having zero vertical average as well.
We also set $\wtilde{Z}\,=\,\mc{I}(\wtilde{X})\,=\,\d_3^{-1}\wtilde{X}$.

\medbreak
With the above notation, we can rewrite \eqref{eq:k=1_lim-conv} in the following form:
\begin{align}
\label{eq:k=1_split-int}
 \lim_{\veps\to0}\int^T_0\int_{\Omega}\rho_\veps\,u_\veps\otimes u_\veps:\nabla\psi\,\dx\,\dd t\,&=\,
 \lim_{\veps\to0}\frac{1}{\oline\rho}\Bigg(\int^T_0\int_{\T^2}\lan V_{\veps,h}\ran\otimes\lan V_{\veps,h}\ran :\nabla_h\nabla^\perp_h\vphi\,\dd x_h\,\dd t \\
\nonumber
&\qquad\qquad\qquad +\,\int^T_0\int_{\T^2}\lan \wtilde V_{\veps,h} \otimes\wtilde V_{\veps,h}\ran :\nabla_h\nabla^\perp_h\vphi\,\dd x_h\,\dd t\Bigg) \,.
\end{align}
We now study the convergence of each integral appearing on the right-hand side of \eqref{eq:k=1_split-int}.

\subsubsection{Convergence of the vertical averages} \label{sss:average}
We start by focusing on the first integral, namely the one involving the product of the vertical averages of the velocity fields $V_\veps$.
Since those quantities are smooth in space, similarly to what done in the proof of Lemma \ref{l:QxQ}, we can integrate 
by parts once and just focus on the term
\begin{align*}
\divh\big(\lan V_{\veps,h}\ran\otimes\lan V_{\veps,h}\ran\big)\,.
\end{align*}

As done in the case $\k=0$, simple algebraic computations allow us to write
\begin{align*}
\divh\big(\lan V_{\veps,h}\ran\otimes\lan V_{\veps,h}\ran\big)\,&=\,\divh\big(\lan V_{\veps,h}\ran\big)\;\lan V_{\veps,h}\ran\,+\,
\big(\lan V_{\veps,h}\ran\cdot\nabla_h\big)\lan V_{\veps,h}\ran \\
&=\,\divh\big(\lan V_{\veps,h}\ran\big)\;\lan V_{\veps,h}\ran\,+\,\frac{1}{2}\,\nabla_h\big|\lan V_{\veps,h}\ran\big|^2\,+\,
\lan\o_{\veps}\ran\,\lan V_{\veps,h}^\perp\ran\,,
\end{align*}
where we have defined $\o_\veps\,:=\,\curlh V_{\veps,h}\, =\, \d_1V_{\veps,2}\,-\,\d_2V_{\veps,1}$. Notice that $\lan\o_\veps\ran\,=\,\curlh\lan V_{\veps,h}\ran$.
Resorting to the notation \eqref{eq:remainder} for the remainder terms, where now the convergence must hold true for test-functions as in \eqref{eq:k=1_test}
and where we omit the dependence on $M\in\N$,
we deduce
\begin{align*}
\divh\big(\lan V_{\veps,h}\ran\otimes\lan V_{\veps,h}\ran\big)\,
&=\,\divh\big(\lan V_{\veps,h}\ran\big)\;\lan V_{\veps,h}\ran\,+\,\lan\o_{\veps}\ran\,\lan V_{\veps,h}^\perp\ran\,+\,\mc R_{\veps}\,,
\end{align*}
After computing the vertical averages of the wave system \eqref{eq:k=1_waves}, we get
\begin{align*}
 \divh\big(\lan V_{\veps,h}\ran\otimes\lan V_{\veps,h}\ran\big)\,&=\,-\,\veps\,\d_t\lan r_\veps\ran\,\lan V_{\veps,h}\ran\,+\,
 \lan\o_{\veps}\ran\,\lan V_{\veps,h}^\perp\ran\,+\,\mc R_{\veps} \\
 &=\,\lan r_\veps\ran\,\veps\,\d_t\lan V_{\veps,h}\ran\,+\, \lan\o_{\veps}\ran\,\lan V_{\veps,h}^\perp\ran\,+\,\mc R_{\veps} \\
 &=\,\veps\, \lan r_\veps\ran\,\lan F_\veps\ran\,+\,\Big(\lan\o_\veps\ran\,-\,\lan r_\veps\ran \Big)\,\lan V_{\veps,h}^\perp\ran\,-\,
\oline b\,\lan r_\veps\ran\,\nabla_h\lan\beta_\veps\ran\,+\,\mc R_\veps \\
 &=\,\Big(\lan\o_\veps\ran\,-\,\lan r_\veps\ran \Big)\,\lan V_{\veps,h}^\perp\ran\,+\,\lan \alpha_\veps\ran\,\nabla_h\lan\beta_\veps\ran\,-\,
\frac{\oline\rho}{2}\nabla_h\lan\beta_\veps\ran^2\,+\,\mc R_\veps\,,
\end{align*}
where, in the last step, we have argued similarly to the proof of Lemma \ref{l:QxQ} and invoked the scalar quantity $\alpha_\veps$. Since $\divh\psi_h=0$
in \eqref{eq:k=1_test}, the gradient term appearing in the last line is another remainder, in the sense of \eqref{eq:remainder}. Thus, we finally get
\begin{align*}
 \divh\big(\lan V_{\veps,h}\ran\otimes\lan V_{\veps,h}\ran\big)\,
 &=\,\Big(\lan\o_\veps\ran\,-\,\lan r_\veps\ran \Big)\,\lan V_{\veps,h}^\perp\ran\,+\,\lan \alpha_\veps\ran\,\nabla_h\lan\beta_\veps\ran\,+\,\mc R_\veps\,.
\end{align*}

At this point, arguing again in a similar way as in Lemma \ref{l:QxQ}, we can show that the sequence $\big(\alpha_\veps\big)_\veps$ is strongly convergent
in \tsl{e.g.} the space $L^2\big([0,T]\times\wtilde\Omega\big)$,
so one can pass to the limit $\lan \alpha_\veps\ran\,\nabla_h\lan\beta_\veps\ran\,\longrightarrow\,\lan\alpha\ran\,\nabla_h\lan\beta\ran$
in the sense of $\mc D'(\R_+\times\T^2)$, when $\veps\to0$. However, as set off by \eqref{constr:TP_k=1}, $\beta=\beta(t,x_h)$ does not depend on $x_3$ in the
case $\k=1$. Therefore $\lan\beta\ran = \beta$, which implies that
\begin{equation}
 \lan \alpha_\veps\ran\,\nabla_h\lan\beta_\veps\ran\,\longrightarrow\,\lan\alpha\ran\,\nabla_h\beta\qquad\qquad \mbox{ in the sense
 of }\ \mc D'(\R_+\times\T^2)\,,\qquad \mbox{ when }\ \veps\to0\,.
\end{equation}
Remark that the evolution $\lan\alpha\ran$ is fully determined: taking the vertical averages in \eqref{eq:alpha-lim} and using that
$u=(u_h,0)$ with $u_h=u_h(t,x_h)$ in this case, we get that this quantity satisfies the transport problem
\begin{equation} \label{eq:alpha-lim_k=1}
\d_t\lan\alpha\ran\,+\,u_h\cdot\nabla_h\lan \alpha\ran\,=\,0\,,\qquad\qquad 
\lan\alpha\ran_{|t=0}\,=\,\lan\alpha_0\ran\,:=\,\oline\rho\,\lan\beta_0\ran\,-\,\oline b\,\lan r_0\ran\,.
\end{equation}

Next, by computing the $\curlh$ of the first two components of the (vertical averages of the) momentum equation in \eqref{eq:k=1_waves},
we find
\[
\veps\, \d_t\lan\o_\veps\ran\,+\,\divh\lan V_\veps\ran\,=\,\veps\,\curlh\lan F_{\veps,h}\ran\,.
\]
Taking the difference of this relation with the averaged version of the first equation in \eqref{eq:k=1_waves} yields
\[
 \d_t \Big(\lan\o_\veps\ran\,-\,\lan r_\veps\ran\Big)\,=\,\curlh\lan F_{\veps,h}\ran\,,
\]
so the sequence $\Big(\lan\o_\veps\ran\,-\,\lan r_\veps\ran\Big)_\veps$ is compact in \tsl{e.g.} the space
$L^2\big([0,T]\times\wtilde\Omega\big)$. Such a compactness property was already discovered in \cite{F-G_RMI}
and further used, for instance, in \cite{DS-F-S-WK, F_PhysD}. Therefore, omitting the details for the sake of conciseness,
we can pass to the limit also in the term involving this quantity, to get
\[
\Big(\lan\o_\veps\ran\,-\,\lan r_\veps\ran \Big)\,\lan V_{\veps,h}^\perp\ran\,\longrightarrow\Big(\oline\rho\,\lan\o\ran\,-\,\lan r\ran \Big)\,\lan V_{h}^\perp\ran\,,
\]
where the convergence is in the sense of $\mc D'\big(\R_+\times\T^2\big)$, in the limit $\veps\to0$. Now, since the target $u$ satisfies
the constraints listed in \eqref{constr:TP_k=1} and one has the decomposition \eqref{eq:dens_dec-2} for the density functions, it is
easy to see that
\[
 \lan\o\ran = \o = \frac{\oline b}{\oline\rho}\Delta_h\beta\qquad \mbox{ and }\qquad 
\lan V_{h}^\perp\ran\,=\,\oline\rho\,\lan u_{h}^\perp\ran\,=\,-\,\oline b\,\nabla_h\beta\,.
\]
At this point, we remark that the following equalities hold true:
\[
 \Big(-\,\oline b\,\lan r\ran\,+\,\lan \alpha\ran\Big)\,\nabla_h\beta\,=\,\oline\rho\,\beta\,\nabla_h\beta\,=\,\frac{\oline\rho}{2}\,\nabla_h\beta^2\,.
\]
Now, the last term on the right identically vanishes when tested against a $\psi$ having zero horizontal divergence, as it is the case in \eqref{eq:k=1_test}.

All in all, we have proved the convergence, in the sense of distributions, of the term involving the vertical averages:
\[
  \divh\big(\lan V_{\veps,h}\ran\otimes\lan V_{\veps,h}\ran\big)\,\longrightarrow\,
  -\,{\oline b}^2\,\Delta_h\beta\,\nabla_h\beta\,.
\]
More precisely, we have shown the convergence
\begin{align}
\label{conv:average}
 \lim_{\veps\to0}\frac{1}{\oline\rho}\int^T_0\int_{\T^2}\lan V_{\veps,h}\ran\otimes\lan V_{\veps,h}\ran :\nabla_h\nabla^\perp_h\vphi\,\dd x_h\,\dd t \,&=\,
\frac{\oline b^2}{\oline\rho}\int^T_0\int_{\T^2}\Delta_h\beta\,\nabla_h\beta\cdot\nabla^\perp_h\vphi\,\dd x_h \,\dd t \\
\nonumber
&=\,-\,\frac{\oline b^2}{\oline\rho}\int^T_0\int_{\T^2}\Delta_h\beta\,\nabla_h^\perp\beta\cdot\nabla_h\vphi\,\dd x_h\,\dd t \\
\nonumber
&=\,\frac{\oline b^2}{\oline\rho}\int^T_0\int_{\T^2}\divh\left(\Delta_h\beta\,\nabla_h^\perp\beta\right)\,\vphi\,\dd x_h\,\dd t\,,
\end{align}
where, as already done in Subsection \ref{ss:k=1_first-converg}, we have adopted an abuse of notation to write the last term as an integral, instead
of a duality pairing in the sense of distributions.

\subsubsection{Convergence of the oscillatory part} \label{sss:oscillatory}

We now deal with the convergence of the second integral appearing in \eqref{eq:k=1_split-int}, namely
\begin{align}
\label{eq:int-oscill}
 \int^T_0\int_{\T^2}\lan \wtilde V_{\veps,h} \otimes\wtilde V_{\veps,h}\ran :\nabla_h\nabla^\perp_h\vphi\,\dd x_h\,\dd t\,&=\,
-\int^T_0\int_{\T^2}\divh\left(\lan\wtilde V_{\veps,h} \otimes\wtilde V_{\veps,h}\ran\right) \cdot \nabla^\perp_h\vphi\,\dd x_h\,\dd t \\
\nonumber
&=\,
-\int^T_0\int_{\T^2}\lan\divh\left(\wtilde V_{\veps,h} \otimes\wtilde V_{\veps,h}\right)\ran \cdot \nabla^\perp_h\vphi\,\dd x_h\,\dd t\,.
\end{align}

As done before, we do some algebra and compute
\begin{align*}
 \divh\left(\wtilde V_{\veps,h} \otimes\wtilde V_{\veps,h}\right)\,&=\,
\divh\big(\wtilde V_{\veps,h}\big)\;\wtilde V_{\veps,h}\,+\, \big(\wtilde V_{\veps,h}\cdot\nabla_h\big)\wtilde V_{\veps,h} \\
&=\,\divh\big(\wtilde V_{\veps,h}\big)\;\wtilde V_{\veps,h}\,+\,\frac{1}{2}\,\nabla_h\big|\wtilde V_{\veps,h}\big|^2\,+\,
\wtilde\o_{\veps}\,\wtilde V_{\veps,h}^\perp \\
&=\,\divh\big(\wtilde V_{\veps,h}\big)\;\wtilde V_{\veps,h}\,+\,\wtilde\o_{\veps}\,\wtilde V_{\veps,h}^\perp\,+\,\mc R_\veps\,,
\end{align*}
where, as above, $\o_\veps\,:=\,\curlh V_{\veps,h}\,=\,\d_1V_{\veps,2}\,-\,\d_2V_{\veps,1}$, so that $\wtilde\o_\veps\, =\,\curlh\wtilde{V}_{\veps,h}$.
Observe that, in writing the last line, we have used that $\divh\psi_h=0$ in \eqref{eq:k=1_test}.

At this point, we use a nowadays well-established structure for the oscillatory components of the solutions, already highlighted
in \cite{F-G-GV-N, F_PhysD} and which in fact goes back to work \cite{G-SR} by Gallagher and Saint-Raymond, devoted to the incompressible case.
Resorting to the notation introduced in Paragraph \ref{sss:wav_Poinc-ac}, direct computations show that we can write
\begin{align} \label{def:Theta}
\left({\curl}\wtilde{V}_{\veps}\right)_h\,&=\,\d_3\wtilde{ \Theta}_{\veps,h}\qquad\qquad\qquad \mbox{ with }\qquad
\wtilde{\Theta}_{\veps,h}\,:=\,\wtilde{V}_{\veps,h}^\perp\,-\,\d_3^{-1}\nabla^\perp_h\wtilde{V}_{\veps,3}\,.
\end{align}
Observe also that  one has $\left({\curl}\wtilde{V}_{\veps}\right)_3 = \wtilde \o_\veps$ with our notation.
By computing the $\curl$ of the momentum equation in the wave system \eqref{eq:k=1_waves}, we further see that those quantities satisfy
the following new wave system, where we get rid of the gradient term:
\begin{equation} \label{eq:wave_d_3-1}
\left\{\begin{array}{l}
        \veps\,\d_t\wtilde \Theta_{\veps,h}\,-\,\wtilde V_{\veps,h}\,=\,\veps\,\d_3^{-1}\left(\curl \wtilde F_{\veps}\right)_h \\[1ex]
        \veps\,\d_t\wtilde\o_\veps\,+\,\divh\wtilde V_{\veps,h}\,=\,\veps\,\curlh \wtilde F_{\veps,h}\,.
       \end{array}
\right.
\end{equation}

Thanks to the previous relations, and to the assumed smoothness with respect to the space variable, we can compute
\begin{align*}
\wtilde\o_{\veps}\,\wtilde V_{\veps,h}^\perp\,&=\,\veps\,\wtilde\o_\veps\,\d_t\wtilde \Theta_{\veps,h}^\perp\,-\,
\veps\,\wtilde\o_\veps\,\d_3^{-1}\left(\curl \wtilde F_{\veps}\right)_h \\
&=\,-\,\veps\,\d_t\wtilde\o_\veps\,\wtilde \Theta_{\veps,h}^\perp\,+\,\mc R_\veps \\
&=\,\divh\wtilde V_{\veps,h}\,\wtilde \Theta_{\veps,h}^\perp\,+\,\mc R_\veps\,.
\end{align*}
In light of this expression, we infer that 
\begin{align*}
 \divh\left(\wtilde V_{\veps,h} \otimes\wtilde V_{\veps,h}\right)\,
&=\,\divh\big(\wtilde V_{\veps,h}\big)\,\left(\wtilde V_{\veps,h}\,+\,\wtilde \Theta_{\veps,h}^\perp\right)\,+\,\mc R_\veps \\
&=\,\div\big(\wtilde V_{\veps}\big)\,\left(\wtilde V_{\veps,h}\,+\,\wtilde \Theta_{\veps,h}^\perp\right)\,-\,
\d_3\wtilde V_{\veps,3}\,\left(\wtilde V_{\veps,h}\,+\,\wtilde \Theta_{\veps,h}^\perp\right)\,+\,\mc R_\veps
\end{align*}
Observe that a vertical average appears in the integral to compute, see equation \eqref{eq:int-oscill}. This implies that total $\d_3$ derivatives
can be treated as remainder terms $\mc R_\veps$. Thus, we can further write
\begin{align*}
 \divh\left(\wtilde V_{\veps,h} \otimes\wtilde V_{\veps,h}\right)\,
&=\,\div\big(\wtilde V_{\veps}\big)\,\left(\wtilde V_{\veps,h}\,+\,\wtilde \Theta_{\veps,h}^\perp\right)\,+\,
\wtilde V_{\veps,3}\,\d_3\left(\wtilde V_{\veps,h}\,+\,\wtilde \Theta_{\veps,h}^\perp\right)\,+\,\mc R_\veps .
\end{align*}
Now, it follows from \eqref{def:Theta} that
\[
 \d_3\left(\wtilde V_{\veps,h}\,+\,\wtilde \Theta_{\veps,h}^\perp\right)\,=\,\nabla_h\wtilde V_{\veps,3}\,,
\]
so that the second term of the right-hand side in equality above is a pure horizontal gradient. Keeping in mind the property $\divh\psi_h=0$
which follows from \eqref{eq:k=1_test}, we thus deduce that
\begin{align}
\label{eq:expr_oscill}
 \divh\left(\wtilde V_{\veps,h} \otimes\wtilde V_{\veps,h}\right)\,
&=\,\div\big(\wtilde V_{\veps}\big)\,\left(\wtilde V_{\veps,h}\,+\,\wtilde \Theta_{\veps,h}^\perp\right)\,+\,\mc R_\veps .
\end{align}

For the remaining term in \eqref{eq:expr_oscill}, we use the first equation in \eqref{eq:k=1_waves} to get
\begin{align*}
 \div\big(\wtilde V_{\veps}\big)\,\left(\wtilde V_{\veps,h}\,+\,\wtilde \Theta_{\veps,h}^\perp\right)\,&=\,
 -\,\veps\,\d_t\wtilde r_\veps\,\left(\wtilde V_{\veps,h}\,+\,\wtilde \Theta_{\veps,h}^\perp\right) \\
 &=\,\wtilde r_\veps\,\veps\,\d_t\left(\wtilde V_{\veps,h}\,+\,\wtilde \Theta_{\veps,h}^\perp\right)\,+\,\mc R_\veps\,.
\end{align*}
At this point, we remark that a fundamental cancellation, involving the rotation terms, appears when computing the equation
for $\d_t\left(\wtilde V_{\veps,h}\,+\,\wtilde \Theta_{\veps,h}^\perp\right)$: it follows from \eqref{eq:k=1_waves} and \eqref{eq:wave_d_3-1} that
\[
 \veps\,\d_t\left(\wtilde V_{\veps,h}\,+\,\wtilde \Theta_{\veps,h}^\perp\right)\,=\,-\,\oline b\,\nabla_h\wtilde\beta_\veps\,+\,\veps\, \wtilde G_\veps\,,
\]
where $\wtilde G_\veps$ is defined in terms of the horizontal components of $\wtilde F_\veps$ and $\curl\wtilde F_\veps$.
Therefore, resorting also to the function $\alpha_\veps$ from Subsection \ref{ss:hidden} and hiding the resulting gradient term into the remainder $\mc R_\veps$,
we obtain
\begin{align*}
 \div\big(\wtilde V_{\veps}\big)\,\left(\wtilde V_{\veps,h}\,+\,\wtilde \Theta_{\veps,h}^\perp\right)\,
 &=\,-\,\oline b\,\wtilde r_\veps\,\nabla\wtilde\beta_\veps\,+\,\mc R_\veps \\
 &=\,\wtilde \alpha_\veps\,\nabla_h\wtilde\beta_\veps\,+\,\mc R_\veps\,.
\end{align*}
To conclude, we take the difference of the oscillatory components of the first two equations in \eqref{eq:k=1_waves}, one sees again that
$\big(\wtilde\alpha_\veps\big)_\veps$ is compact in \tsl{e.g.} $L^2\big([0,T]\times\wtilde\Omega\big)$,
so there is no problem in taking the limit $\veps\to0$ in the first term appearing on the right-hand side of \eqref{eq:expr_oscill}.
Since the target $\beta=\beta(t,x_h)$ does not depend on $x_3$, as established in \eqref{constr:TP_k=1},
we have that $\wtilde\beta_\veps$ is weakly convergent to $0$ for $\veps\to0$. Therefore, we deduce that
\begin{equation} \label{eq:a-b_remainder}
 \wtilde \alpha_\veps\,\nabla_h\wtilde\beta_\veps\,\longrightarrow\,0\qquad\qquad \mbox{ in the sense of }\quad \mc D'\big([0,T]\times\wtilde\Omega\big)\,,
\end{equation}
which in turn implies that $\wtilde \alpha_\veps\,\nabla_h\wtilde\beta_\veps\,=\,\mc R_\veps$ in the sense of \eqref{eq:remainder}.

Inserting this last finding into \eqref{eq:expr_oscill}, we finally find that
\begin{equation} \label{eq:conv_oscill}
 \divh\left(\wtilde V_{\veps,h} \otimes\wtilde V_{\veps,h}\right)\,=\,\mc R_\veps \qquad\qquad \mbox{ in the sense of \eqref{eq:remainder},}
\end{equation}
whenever we use a test-function $\psi$ as in \eqref{eq:k=1_test}.

\subsection{Convergence: conclusions} \label{ss:conclusions}
Plugging relations \eqref{conv:average} and \eqref{eq:conv_oscill} into \eqref{eq:k=1_split-int}, we see that, for any test-function
$\psi\,=\,\big(\nabla^\perp_h\vphi, 0\big)$ as in \eqref{eq:k=1_test}, one has the convergence
\begin{align}
\label{eq:k=1_conv_convect}
 \lim_{\veps\to0}\int^T_0\int_{\Omega}\rho_\veps\,u_\veps\otimes u_\veps:\nabla\psi\,\dx\,\dd t\,&=\,
 -\,\frac{\oline b^2}{\oline\rho}\int^T_0\int_{\T^2}\Delta_h\beta\,\nabla_h^\perp\beta\cdot\nabla_h\vphi\,\dd x_h\,\dd t \\
\nonumber
&=\,\frac{\oline b^2}{\oline\rho}\int^T_0\int_{\T^2}\divh\left(\Delta_h\beta\,\nabla_h^\perp\beta\right)\,\vphi\,\dd x_h\,\dd t\,.
\end{align}

With this relation at hand, using also the convergence properties shown in Subsections \ref{ss:k=1_first-converg} and \ref{ss:k=1_rot} and
equation \eqref{eq:alpha-lim_k=1}, we finally complete the proof of Theorem \ref{th:lim_Coriols}.


{\small

}

\end{document}